\newtheorem{lemma}{Lemma}[section]
\newtheorem{theorem}[lemma]{Theorem}
\newtheorem{proposition}[lemma]{Proposition}
\numberwithin{equation}{section}
\title{\textsf{ Super-biderivations of Cartan type Lie superalgebras}}
\author{\textsc{Jixia Yuan,$^{1,2,}$}\footnote{Supported by  the National Natural Science Foundation of China (11601135), Natural Science Foundation of Heilongjiang Province of China (QC2017002) and Project funded by China Postdoctoral Science Foundation}\;\;\textsc{Liangyun Chen$^{1,}$}\footnote{Correspondence: chenly640@nenu.edu.cn. (L. Chen);  Supported by the National Natural Science Foundation
  of China (11771069, NSF of Jilin province (No. 20170101048JC) and the project of jilin province department of education (No. JJKH20180005K))}\;\;  \textsc{ and Yan Cao$^{3,}$}\footnote{Supported by  the National Natural Science Foundation of China (11801121), Natural Science Foundation of Heilongjiang Province of China (QC2018006) } \\
  \\
  \textit{1. School of Mathematical and Statistics},
  \textit{Northeast Normal University} \\
  \textit{Changchun 130024, China}\\
\\
  \ \ \textit{2. School of Mathematical Sciences},
  \textit{Heilongjiang University} \\
  \textit{Harbin 150080, China}\\
  \\
  \ \ \textit{3.  Department of Mathematics},
  \textit{Harbin University of Science and Technology} \\
  \textit{Harbin 150080, China}
  }
\date{ }
\begin{document}
\maketitle
\begin{quotation}
\noindent\textbf{Abstract.} In this paper, we characterize the  super-biderivations of  Cartan type Lie superalgebras over the complex field $\mathbb{C}$. Furthermore, we prove that all super-biderivations of Cartan type simple Lie superalgebras  are inner super-biderivations.
\\

\noindent \textbf{Mathematics Subject Classification}. 17B40, 17B65, 17B66.

 \noindent \textbf{Keywords}.   Lie superalgebras,Cartan type
Lie superalgebras, super-derivations, super-biderivations.

  \end{quotation}

  \setcounter{section}{0}
\section{Introduction}

As is well known,  all  the finite dimensional simple Lie  superalgebras over $\mathbb{C}$ consist of classical   Lie superalgebras  and Cartan type Lie superalgebras \cite{k1}. However, four families of $\mathbb{Z}$-graded Lie superalgebras of Cartan-type are, the
Witt type, the special type and the Hamiltonian type.  These Lie superalgebras are subalgebras of
the full superderivation algebras of the  exterior
superalgebras. The structural theory of these superalgebras have been playing a key role in the theory of  Lie superalgebras.
Derivations and generalized derivations are very important subjects in the
research of both algebras and their generalizations.

The concept of biderivations were introduced and studied    in \cite{Bre1995}, and the authors showed that all biderivations on
commutative prime rings are inner biderivations and determined the biderivations of semiprime rings. In recent years, biderivations have aroused the interest of a great many authors, see  \cite{Bre1995,Du2013,Gho2013}.   The notion of biderivations of Lie algebras was introduced in \cite{wyc}.  The  biderivations of Lie  algebras have been sufficiently studied. In \cite{c,c2017,wy}, the authors
proved that all anti-symmetric biderivations on the Schr$\ddot{o}$dinger-Virasoro algebra, the
simple generalized Witt algebra and the infinite-dimensional Galilean conformal algebra
are inner, respectively. In \cite{Hanw}, the
authors determined all the skew-symmetric biderivations of W(a;b) and found that there
exist non-inner biderivations. In \cite{t1,t2}, the authors characterized the biderivations without
the anti-symmetric condition of the finite dimensional complex simple Lie algebra and
some W-algebras, meanwhile, presented some classes of non-inner biderivations. In 2015,  Xu
and  Wang  generalized the notion of biderivations of Lie  algebra to the
super case in \cite{xw}, and the authors mainly discussed some properties of super-biderivations on Heisenberg superalgebras. In \cite{fd}, the authors proved that all super-skewsymmetric super-biderivations on the centerless super-Virasoro algebras are inner super-biderivations. In \cite{xwh}, the authors proved  that any super-skewsymmetric super-biderivation of super-Virasoro  algebras was inner. In \cite{yt},  the authors  proved that all super-biderivations of classical simple Lie superalgebras  were inner super-biderivations and finded a class of a non-inner and non-skewsymmetric
 super-biderivations of general linear Lie superalgebra.  In \cite{lty}, the authors
 determined all the  biderivations of $W(2,2)$ and found that there
existed non-inner biderivations.

In this paper, we are interested in determining all super-biderivations of Cartan type Lie superalgebras over $\mathbb{C}$ without the skew-symmetric condition.   The main result in this paper is a complete characterization of the super-biderivations of  Cartan type Lie superalgebras.
The paper is organized as follows. In Section 2, we recall some necessary concepts and notations. In   Section 3, we  establish  several  lemmas, which will be used to characterize the super-biderivations of classical simple Lie superalgebras. In Section 4, we determine all super-biderivations of Cartan type Lie superalgebras $W(n), S(n), \widetilde{S}(n)$ without the super-skewsymmetric condition.  In Section 5,  we determine all super-biderivations of Cartan type Lie superalgebras $H(n)$ without the super-skewsymmetric condition.

\section{Preliminaries}
Throughout $\mathbb{C}$ is the  field of complex numbers, $\mathbb{N}$ the
set of nonnegative integers and
$\mathbb{Z}_2= \{\bar{0},\bar{1}\}$   the additive group of two
elements.  For a  vector superspace $V=V_{\bar{0}}\oplus
V_{\bar{1}}, $ we write $|x|$ for the
parity of    $x\in V_{\alpha},$ where $\alpha\in \mathbb{Z}_2$. Once the symbol $|x|$ appears
in this paper, it
 will imply that $x$ is  a  $\mathbb{Z}_2$-homogeneous element. We also adopt the following notation: For  a proposition $P$, put
 $\delta _{P}=1$ if $P$ is true and $\delta _{P}=0$ otherwise.

 \subsection{Lie algebras, derivation and biderivation}
Let us  recall some definitions relative to Lie algebras \cite{h}. A Lie algebra is a vector space $L$ with a   bilinear   mapping
$[\cdot,\cdot]: L\times L\longrightarrow L$   satisfying the following axioms:
\begin{eqnarray*}
&& [x,y]=-[y,x],\\
&&[x,[y,z]]=[[x,y],z]+[y,[x,z]]
\end{eqnarray*}
for all $x, y, z\in L.$

We call a  bilinear map $f:L\times L \longrightarrow L$ a  biderivation of $L$ if it satisfies the
following two equations:
\begin{eqnarray*}
&& f([x,y], z)=[x, f(y,z)]+[f(x,z), y],\\
&& f(x, [y, z])=[f(x,y), z]+[y, f(x,z)]
\end{eqnarray*}
for all $x, y$, $z \in L.$
If the map $f_{\lambda}:L\times L \longrightarrow L,$ defined by $f_{\lambda}(x,y)=\lambda[x, y]$ for all  $x, y\in L,$ where $\lambda\in \mathbb{C},$ then it is easy to check that $f_{\lambda}$ is a biderivation of $L$. We call this class of biderivations by inner biderivations.

\subsection{Lie superalgebras, super-derivation and  super-biderivation}
Let us  recall some definitions relative to Lie superalgebras \cite{k1}. A Lie superalgebra is a vector superspace $L=L_{\bar{0}}\oplus
L_{\bar{1}}$ with an even bilinear   mapping
$[\cdot,\cdot]: L\times L\longrightarrow L$   satisfying the following axioms:
\begin{eqnarray*}
&& [x,y]=-(-1)^{|x||y|}[y,x],\\
&&[x,[y,z]]=[[x,y],z]+(-1)^{|x||y|}[y,[x,z]]
\end{eqnarray*}
for all $x, y, z\in L.$

We call a  linear map $D:  L \longrightarrow L$ a  super-derivation of Lie superalgebra $L$ if it satisfies the
following   equation:
\begin{eqnarray*}\label{yuantange1}
&& D([x,y])=[D(x), y]+(-1)^{|D||x|}[x, D(y)]
\end{eqnarray*}
for all $x, y \in L.$ Write $\mathrm{Der}_{\bar{0}}L$ (resp. $\mathrm{Der}_{\bar{1}}L$) for the set of all super-derivations
of $\mathbb{Z}_{2}$-homogeneous $\bar{0}$ (resp.  $\bar{1}$) of $L$.  Denote
$$\mathrm{Der}L=\mathrm{Der}_{\bar{0}}L\oplus \mathrm{Der}_{\bar{1}}L.$$

We call a  bilinear map $f:L\times L \longrightarrow L$ a super-biderivation of $L$ if it satisfies the
following two equations:
\begin{eqnarray}
&& f([x,y], z)=(-1)^{|x||f|}[x, f(y,z)]+(-1)^{|y||z|}[f(x,z), y],\label{e1}\\\nonumber
&& f(x, [y, z])=[f(x,y), z]+(-1)^{(|f|+|x|)|y|}[y, f(x,z)] \label{e3}
\end{eqnarray}
for all $x, y$, $z \in L.$ Note that our definition of super-biderivation  does not assume particular super-skewsymmetry which is different from \cite{fd}.
If the map $f_{\lambda}:L\times L \longrightarrow L$ is defined by $f_{\lambda}(x,y)=\lambda[x, y]$ for all  $x, y\in L,$ where $\lambda\in \mathbb{C},$ then it is easy to check that $f_{\lambda}$ is a super-biderivation of $L$. This class of super-biderivations is said to be inner. Denote by $\mathrm{IBDer}(L)$ the set of all inner super-biderivations.

A super-biderivation $f$ of $\mathbb{Z}_{2}$-homogeneous $\gamma$ of $L$ is a super-biderivation such
that $f(L_{\alpha}, L_{\beta})\subseteq L_{\alpha+\beta+\gamma}$ for any $\alpha, \beta\in \mathbb{Z}_{2}$. Write $\mathrm{BDer}_{\bar{0}}(L)$ (resp. $\mathrm{BDer}_{\bar{1}}(L)$) for the set of all super-biderivations
of $\mathbb{Z}_{2}$-homogeneous $\bar{0}$ (resp.  $\bar{1}$) of $L$.  Denote
$$\mathrm{BDer}(L)=\mathrm{BDer}_{\bar{0}}(L)\oplus \mathrm{BDer}_{\bar{1}}(L).$$

\subsection{Cartan type Lie superalgebras}

Let $n\geq 4$ be   an   integer and $\Lambda(n)$ be the exterior algebra in $n$ indeterminates $x_{1}, x_{2},\ldots, x_{n}$ with $\mathbb{Z}_{2}$-grading structure given by $|x_{i}|=\bar{1}.$   One may define a $\mathbb{Z}$-grading on $\Lambda(n)$ by letting $\mathrm{deg}x_{i}=1,$ where $1 \leq i\leq n.$ Write $n=2r$ or $n=2r+1,$ where $r\in \mathbb{N}.$ Put
 $\left[\frac{n}{2}\right]=r.$
 Cartan type Lie superalgebras consist of
four series of  simple Lie superalgebras contained in the full superderivation algebras of $\Lambda(n)$:
\begin{eqnarray*}
&&W(n)=\left\{\sum_{i=1}^{n}f_{i}\partial_{i}\mid  f_{i}\in \Lambda(n)\right\},
\\&&
S(n)=\left\{\sum_{i=1}^{n}f_{i}\partial_{i}\mid  f_{i}\in \Lambda(n), \sum_{i=1}^{n}\partial_{i}(f_{i})=0\right\},
\\&&
\widetilde{S}(n)=\left\{(1-x_{1}x_{2}\cdots x_{n})\sum_{i=1}^{n}f_{i}\partial_{i}\mid  f_{i}\in \Lambda(n), \sum_{i=1}^{n}\partial_{i}(f_{i})=0\right\}\; \mbox{(}n \;\mbox{is an even integer}\mbox{)},
\\&&
H(n)=\left\{\mathrm{D_{H}}(f)\mid  f\in \oplus_{i=0}^{n-1}\Lambda(n)_{i}\right\} \;\mbox{(}n> 4\mbox{)},
\end{eqnarray*}
where
$$\mathrm{D_{H}}(f)=(-1)^{|f|}\sum_{i=1}^{n}\partial_{i}(f)\partial_{i'},$$
\[ i'=\left\{
 \begin{array}{lll}
i+r,&\mbox{if  $1 \leq i\leq \left[\frac{n}{2}\right],$ }\\
i-r,&\mbox{if $ \left[\frac{n}{2}\right]< i\leq 2\left[\frac{n}{2}\right],$}\\
i,&\mbox{otherwise}.\\
\end {array}
\right.
\]
A simple computation shows
\begin{eqnarray}\label{hee1.2}
[\mathrm{D_{H}}(f),\mathrm{D_{H}}(g)]=\mathrm{D_{H}}(\mathrm{D_{H}}(f)(g))\;\mbox{for}\;
   f,g\in \Lambda(n).
\end{eqnarray}
 One may define a $\mathbb{Z}$-grading on $W(n)$ by letting $\mathrm{deg}x_{i}=1=-\mathrm{deg}\partial_{i},$ where $1 \leq i\leq n.$ Thus $W(n)$   becomes a $\mathbb{Z}$-graded Lie superalgebra of 1 depth: $W(n)=\bigoplus_{j=-1}^{\xi_{W}}W(n)_{j},$ where $\xi_{W}=n-1.$
Suppose $L=S(n)$ or $H(n)$. Then $L$ is a  $\mathbb{Z}$-graded subalgebra  of $W(n)$. The $\mathbb{Z}$-grading is defined as follows:
  $L=\bigoplus_{j=-1}^{\xi_{L}}L_{j},$
  where $L_{j}=L\cap W(n)_{j}$ and
 \[
\xi_{L}=\left\{
 \begin{array}{ll}
n-2,&\mbox{if  $L=S(n),$ }\\
n-3,&\mbox{if $H(n)$.}
\end {array}
\right.
\]
Put $$\widetilde{S}(n)_{-1}=\mathrm{span}_{\mathbb{C}}\left\{(1-x_{1}x_{2}\cdots x_{n})\partial_{i}\mid  1\leq i \leq n\right\},\; \widetilde{S}(n)_{i}=S(n)_{i} \;\mbox{for\;} i>-1.$$
Then $\widetilde{S}(n)$ becomes a $\mathbb{Z}_{n}$-graded Lie superalgebra:
$
\widetilde{S}(n)=\bigoplus_{i=-1}^{\xi_{\widetilde{S}}}\widetilde{S}(n)_{i},
 $
 where $\xi_{\widetilde{S}}=n-2.$
The $0$-degree components of these superalgebras are classical Lie algebras:
$$W(n)_{0}\cong \mathfrak{gl}(n),\;S(n)_{0}=\widetilde{S}(n)_{0}\cong \mathfrak{sl}(n),\; H(n)_{0}\cong \mathfrak{so}(n).$$
Note that the $-1$-degree components of these superalgebras is the dual module of natural module of the $0$-degree components.

Let $L=\oplus_{i\in \mathbb{Z}}L_{i}$ be a $\mathbb{Z}$-graded Lie superalgbra, $H_{L}$ be the standard Cartan subalgebra  of $L$, $\theta\in H_{L}^{*}$ be the zero weight, $\Delta^{L}$ be the root system of $L$ and $\Delta^{L}_{i}$ be the root system of $L_{i}$.
If $x\in L_{\varepsilon}$ (resp. $x\in L_{i}$), where $\varepsilon\in  H_{L}^{*}$ and $i\in \mathbb{Z}$, then we say that $x$ is $H_{L}^{*}$-homogeneous (resp. $\mathbb{Z}$-homogeneous) of degree $\varepsilon$ (resp. $i$) and we write $||x||=\varepsilon$ (resp. $\mathrm{deg}(x)=i$).

Let us describe the roots  of Cartan type Lie superalgebras. If  $L=W(n),$ we choose the standard basis $\{\varepsilon_{1}, \ldots, \varepsilon_{n}\}$ in $H_{L}^{*},$ and then
$$
\Delta^{L}=\{\varepsilon_{i_{1}}+\cdots +\varepsilon_{i_{k}}-\varepsilon_{i} \mid  1\leq i_{1} <\cdots< i_{k}\leq n,  1\leq i\leq n \}.
$$
The root systems of $S(n)$ and $\widetilde{S}(n)$ are obtained from the root system of $W(n)$
by removing the roots $\varepsilon_{1}+\cdots +\varepsilon_{n}-\varepsilon_{i},$ where $1\leq i \leq n.$
Finally if $L=H(n)$, then
$$\Delta^{L}=\left\{\pm \varepsilon_{i_{1}}\pm\cdots \pm\varepsilon_{i_{k}}  \mid  1\leq i_{1} <\cdots< i_{k}\leq \left[\frac{n}{2}\right]\right\}.$$

\section{General lemmas}
In this section, let us establish  several  lemmas, which will be used in  proving the main result for this paper. Put $\mathcal{C}=\sum_{i=1}^{n}x_{i}\partial_{i}$ and $\widetilde{H}(n)=\left\{\mathrm{D_{H}}(f)\mid  f\in \Lambda(n)\right\} $. By \cite{s}, we have the following Lemma.
\begin{lemma}\label{l1}
Let $L$ be a Cartan type Lie superalgebra. Then  $\mathrm{Der}L=\mathrm{ad}L',$ where
\[L'\cong\left\{
 \begin{array}{lll}
L, &\mbox{if  $L=W(n), \widetilde{S}(n),$ }\\
L\oplus \mathbb{C}\mathcal{C}, &\mbox{if  $L=S(n),$ }\\
\widetilde{H}(n)\oplus \mathbb{C}\mathcal{C}, &\mbox{if  $L=H(n).$ }
\end {array}
\right. \]
\end{lemma}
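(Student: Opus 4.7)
The plan is to exploit the $\mathbb{Z}$-grading of $L$ to reduce the computation to the graded components of $\mathrm{Der}L$. Since each $L=W(n), S(n), \widetilde{S}(n), H(n)$ is $\mathbb{Z}$-graded with finite grading, any $D \in \mathrm{Der}L$ decomposes uniquely as $D = \sum_{j} D^{(j)}$ with $D^{(j)}(L_i) \subseteq L_{i+j}$, and it suffices to determine $\mathrm{Der}_{j}L$ for each admissible $j$.

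I would handle the nonzero-degree components first. The two ingredients are (i) $L$ is generated as a Lie superalgebra by $L_{-1}$ together with the low-degree part, and (ii) the $\mathbb{Z}$-grading is transitive from above, meaning that if $x$ is $\mathbb{Z}$-homogeneous of nonnegative degree and $[x, L_{-1}] = 0$, then $x = 0$. Together these allow one, for each $j \neq 0$, to construct $y \in L'_{j}$ such that $D^{(j)} - \mathrm{ad}(y)$ annihilates $L_{-1}$, at which point transitivity forces $D^{(j)} = \mathrm{ad}(y)$. The delicate piece is $j = 0$. Here $L_{0}$ is a classical Lie algebra ($\mathfrak{gl}(n)$, $\mathfrak{sl}(n)$, or $\mathfrak{so}(n)$) acting faithfully on $L_{-1}$, so $D^{(0)}|_{L_{0}}$ must agree with an inner derivation of $L_{0}$, and after subtracting it we may assume $D^{(0)}|_{L_{0}} = 0$. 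Then $D^{(0)}$ commutes with the $L_{0}$-action on every $L_{i}$, and a Schur-type argument together with the super-Leibniz rule pins $D^{(0)}$ down to a scalar multiple of $\mathrm{ad}(\mathcal{C})$. The four cases then differ only in whether this outer piece is already realized inside $L$: the element $\mathcal{C}$ (or an avatar incorporating the factor $1-x_{1}x_{2}\cdots x_{n}$) lives inside $W(n)$ and $\widetilde{S}(n)$; for $S(n)$ one must adjoin $\mathcal{C}$; and for $H(n)$ one must adjoin both $\mathcal{C}$ and the additional element $\mathrm{D_{H}}(x_{1}x_{2}\cdots x_{n})\in\widetilde{H}(n)\setminus H(n)$.

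The main obstacle will be the bookkeeping in the $H(n)$ case: one must verify that $\mathrm{ad}(\mathrm{D_{H}}(x_{1}x_{2}\cdots x_{n}))$ acts nontrivially on $H(n)$, that no element of $H(n)$ reproduces this action, and that no further outer degree-zero derivations exist beyond this element and $\mathcal{C}$. Once these checks are made, the stated isomorphisms for $L'$ follow, and the entire case-by-case verification is exactly what is carried out in the cited reference \cite{s}.
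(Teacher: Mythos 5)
The paper does not actually prove this lemma: it is quoted directly from the literature (``By \cite{s}, we have the following Lemma''), so there is no in-paper argument to match your proposal against. Your sketch is the standard proof that such references carry out --- decompose $\mathrm{Der}L$ along the $\mathbb{Z}$-grading, kill the nonzero-degree components using generation by low degrees together with transitivity, and isolate the outer degree-zero piece as a multiple of the grading derivation $\mathrm{ad}\,\mathcal{C}$ --- and in broad outline it is the right plan.

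Two points in the sketch are genuinely off and would need repair before this could count as a proof. First, the treatment of $\widetilde{S}(n)$: this algebra is only $\mathbb{Z}_{n}$-graded (equivalently, $\mathbb{Z}$-filtered), not $\mathbb{Z}$-graded as a subalgebra of $W(n)$, since $\widetilde{S}(n)_{-1}$ is spanned by the inhomogeneous elements $(1-x_{1}\cdots x_{n})\partial_{i}$. Consequently the decomposition $D=\sum_{j}D^{(j)}$ and the degree-by-degree induction do not apply verbatim, and your claim that ``$\mathcal{C}$ or an avatar incorporating the factor $1-x_{1}\cdots x_{n}$ lives inside $\widetilde{S}(n)$'' is false: no element of $\widetilde{S}(n)$ (or of $W(n)$) induces a degree derivation of $\widetilde{S}(n)$, because $[\widetilde{S}(n)_{-1},\widetilde{S}(n)_{-1}]\subseteq \widetilde{S}(n)_{n-2}$ shows the wrap-around of the $\mathbb{Z}_{n}$-grading is incompatible with any degree derivation. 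The correct reason $\mathrm{Der}\widetilde{S}(n)=\mathrm{ad}\widetilde{S}(n)$ is that this outer candidate does not survive, not that it becomes inner. Second, in the degree-zero step, ``$L_{0}$ is classical, hence $D^{(0)}|_{L_{0}}$ is inner'' is not enough when $L_{0}\cong\mathfrak{gl}(n)$, which has outer derivations (e.g.\ scaling the center); one must use the compatibility of $D^{(0)}$ with the action on $L_{-1}$ (via the Leibniz rule applied to $[\mathcal{C},\partial_{i}]$) to rule these out. With these two repairs, and the verification you already flag for $\mathrm{D_{H}}(x_{1}\cdots x_{n})$ in the $H(n)$ case, the argument goes through and reproduces the cited result.
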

By Lemma \ref{l1} and a simple computation, we  shows
$\Delta^{L'}=\Delta^{L}$ for   $L=W(n), S(n), \widetilde{S}(n)$ or $H(n).$
\begin{lemma}\label{chenl1}
Let $L=H(n),$ $-1\leq k, l\leq \xi_{H}$ and $x\in L_{k}$. If $k+l=\xi_{H}$ and $[x, L_{l}]=0,$ then  $x=0.$
\end{lemma}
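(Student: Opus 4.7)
The plan is to translate the vanishing $[x, L_l] = 0$ into a vanishing condition on the action of $D_H(f)$ on a graded piece of the exterior algebra $\Lambda(n)$, and then close by an explicit monomial calculation. First, I write $x = D_H(f)$ for a uniquely determined $f \in \Lambda(n)_{k+2}$: indeed $D_H$ is injective on $\bigoplus_{i\ge 1}\Lambda(n)_i$, because $D_H(h)=0$ forces $\partial_i(h)=0$ for every $i$, hence $h \in \mathbb{C}$. The constraints $k, l \ge -1$, $k+l = n-3$, and $k, l \le n-3$ actually force $k, l \ge 0$, so $k+2, l+2 \in [2, n-1]$. Applying the bracket identity \eqref{hee1.2}, the hypothesis $[D_H(f), D_H(g)]=0$ for every $g \in \Lambda(n)_{l+2}$ becomes $D_H(D_H(f)(g))=0$; since $D_H(f)(g)$ lies in $\Lambda(n)_{k+l+2}=\Lambda(n)_{n-1}$, which is of positive $\Lambda$-degree, injectivity of $D_H$ yields $D_H(f)(g)=0$ for every such $g$.

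It then remains to show that the only $f \in \Lambda(n)_{k+2}$ annihilating all of $\Lambda(n)_{l+2}$ under $D_H(f)$ is $f = 0$. Expanding $f = \sum_{|I|=k+2} a_I x_I$ and supposing for contradiction that $a_{I_0} \ne 0$, I would pick any $j \in I_0$ and take $J_0 = (\{1,\ldots,n\}\setminus I_0) \cup \{j\}$, so that $|J_0| = l+2$. Writing
\begin{align*}
D_H(f)(x_{J_0}) = (-1)^{|f|}\sum_I a_I \sum_{\substack{i\in I\\ i' \in J_0}} \partial_i(x_I)\cdot \partial_{i'}(x_{J_0})
\end{align*}
and decomposing in the basis $\{x_{\{1,\ldots,n\}\setminus\{m\}}\}_{m=1}^n$ of $\Lambda(n)_{n-1}$, I inspect the coefficient of $x_{\{1,\ldots,n\}\setminus\{j'\}}$. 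A set-theoretic analysis of the equation $(I\setminus\{i\})\sqcup(J_0\setminus\{i'\})=\{1,\ldots,n\}\setminus\{j'\}$ shows that the only $I$'s potentially contributing are either $I = I_0$ (yielding $\pm a_{I_0}$ from a uniquely determined $i$), or---only when $j' \in I_0$---sets of the form $I = (I_0\setminus\{j,j'\})\cup\{a,a'\}$ with $\{a,a'\} \subseteq I_0^c$ a pair under the involution $i \mapsto i'$; in the latter case, the two choices $i = a$ and $i = a'$ contribute with opposite signs and cancel. Hence the net coefficient is $\pm a_{I_0}\ne 0$, contradicting $D_H(f)(x_{J_0})=0$.

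The main obstacle is verifying the sign cancellation in the second family, which requires careful tracking of the signs arising from $\partial_i(x_I)$, $\partial_{i'}(x_{J_0})$, and the reordering in their product. When $n$ is odd, the involution $i\mapsto i'$ has a fixed point at $i = n$, so a singleton ``pair'' $\{n\}$ appears and the cancellation mechanism requires a short separate check, which is routine once the even case is settled.
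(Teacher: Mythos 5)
Your proposal is correct and takes essentially the same route as the paper: expand $x$ in the monomial basis, bracket against the test element $\mathrm{D_H}(x_jx_{I_0^c})$ (the paper's Cases 1 and 2 are exactly your two situations $j'\in I_0$ and $j'\notin I_0$, with test elements $\mathrm{D_H}(x_{i_1'}x_J)$ and $\mathrm{D_H}(x_{i_1}x_J)$), and read off the coefficient of $\mathrm{D_H}(x_{\{1,\ldots,n\}\setminus\{j'\}})$. The only difference is that you explicitly verify that the cross-terms from monomials $I\neq I_0$ cancel in conjugate pairs (and one can check the fixed-point ``pair'' $\{n\}$ never actually occurs there, since it would violate $|I|=k+2$), a point the paper passes over when it says ``observing the coefficients.''
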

\begin{proof}
We write
$$x=\sum_{1\leq i_{1}<\cdots< i_{k+2}\leq n}a_{i_{1}\cdots i_{k+2}}\mathrm{D_{H}}(x_{i_{1}}\cdots x_{i_{k+2}})\in L_{k},$$
where $a_{i_{1}\cdots i_{k+2}}\in \mathbb{C}.$
For $1\leq i_{1}<\cdots< i_{k+2}\leq n,$  put
$$I=\{i_{1},\ldots, i_{k+2}\}, J=\{j_{1}, \ldots, j_{l+1}\}=\{1, \ldots, n\}\backslash I.$$
 Our discussion is divided
into two cases.

$\emph{Case 1.}$ If $i_{1}'\notin J,$ then $0\neq \mathrm{D_{H}}(x_{i_{1}'}x_{j_{1}}\cdots x_{j_{l+1}})\in L_{l}.$ So
 \begin{eqnarray}\label{1}
 \left[\sum_{1\leq i_{1}<\cdots< i_{k+2}\leq n}a_{i_{1}\cdots i_{k+2}}\mathrm{D_{H}}(x_{i_{1}}\cdots x_{i_{k+2}}), \mathrm{D_{H}}(x_{i_{1}'}x_{j_{1}}\cdots x_{j_{l+1}})\right]=0.
  \end{eqnarray}
  By equation (\ref{hee1.2}) and observing the coefficients of terms   $\mathrm{D_{H}}(x_{i_{2}}\cdots x_{i_{k+2}}x_{j_{1}}\cdots x_{j_{l+1}})$ in equation (\ref{1}),  we can obtain  $a_{i_{1}\cdots i_{k+2}}=0.$

$\emph{Case 2.}$ If $i_{1}'\in J,$ without loss of generality we can let $i_{1}'=j_{1},$  then $0\neq \mathrm{D_{H}}(x_{i_{1}}x_{j_{1}}\cdots x_{j_{l+1}}) \in L_{l}.$ So
  \begin{eqnarray}\label{2}
 \left[\sum_{1\leq i_{1}<\cdots< i_{k+2}\leq n}a_{i_{1}\cdots i_{k+2}}\mathrm{D_{H}}(x_{i_{1}}\cdots x_{i_{k+2}}), \mathrm{D_{H}}(x_{i_{1}}x_{j_{1}}\cdots x_{j_{l+1}})\right]=0.
  \end{eqnarray}
By equation (\ref{hee1.2}) and observing the coefficients of terms   $\mathrm{D_{H}}(x_{i_{1}}\cdots x_{i_{k+2}}x_{j_{2}}\cdots x_{j_{l+1}})$  in equation (\ref{2}) we can obtain   $a_{i_{1}\cdots i_{k+2}}=0.$

In summary, we have $x=0.$
\end{proof}
For a nonempty subset
$V$ of a Lie superalgebra $L$, we write $\mathrm{alg}(V)$ for subalgebra of $L$, which  generated by $V$. By  a direct calculation, we have the following lemma.
\begin{lemma}\label{l2}
Let $L$ be a  Cartan type Lie superalgebra. Then  the following properties hold:
\begin{itemize}
  \item [(1)] $[L_{-1}, L_{i}]=L_{i-1}$.
  \item [(2)] $\mathrm{alg}(L_{-1}\oplus L_{0}\oplus L_{1})=L$.
  \item [(3)] $L'$ is transitive, that is $a\in \oplus_{i\geq 0}L'_{i} $ and  $\left[a, L'_{-1}\right]=0,$ then $a=0.$
  \item [(4)] $L$ is irreducible, that is $L_{-1}$ is irreducible $L_{0}$-module.
\end{itemize}
\end{lemma}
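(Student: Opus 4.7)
The plan is to verify each of the four claims case by case for $L = W(n), S(n), \widetilde{S}(n), H(n)$ using the explicit realizations on $\Lambda(n)$. For (1), the computational core is the identity $[\partial_k, f\partial_j] = \partial_k(f)\partial_j$ in $W(n)$, together with its analogue \eqref{hee1.2} for $H(n)$. Given a typical monomial generator of $L_{i-1}$, one multiplies by an extra $x_\ell$ to produce an element of $L_i$ whose bracket with $\partial_\ell \in L_{-1}$ returns the original element up to sign, showing $L_{i-1} \subseteq [L_{-1}, L_i]$; the reverse inclusion is immediate from the $\mathbb{Z}$-grading. The divergence-free constraint defining $S(n)$ and $\widetilde{S}(n)$ and the Hamiltonian form defining $H(n)$ are handled by choosing $\ell$ appropriately.

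For (2), once (1) is in hand, it suffices to produce $L_i$ for $i \geq 2$ from $L_1$. A direct monomial computation shows $L_i = [L_1, L_{i-1}]$: a typical generator of $L_i$ is realized as the bracket of a suitable degree-$1$ element with something of degree $i-1$. A straightforward induction on $i$ then completes the argument.

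For (3), given $a = \sum_{i\geq 0} a_i$ with $[a, L'_{-1}] = 0$, the $W(n)$ case is immediate: writing $a = \sum_j f_j\partial_j$ with $f_j \in \oplus_{s \geq 1}\Lambda(n)_s$, the hypothesis $[\partial_k, a] = 0$ forces $\partial_k(f_j) = 0$ for every $k,j$, hence each $f_j$ is constant of positive degree, i.e.\ zero. The cases $S(n), \widetilde{S}(n), H(n)$ are parallel; the extra summand $\mathbb{C}\mathcal{C}$ appearing in $L'$ lies in degree $0$ and does not enter the transitivity argument, while for $H(n)$ one reuses the $I, J$ bookkeeping from Lemma \ref{chenl1}. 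For (4), this is classical representation theory: $L_{-1}$ is (the dual of) the natural module of $L_0 \cong \mathfrak{gl}(n), \mathfrak{sl}(n)$, or $\mathfrak{so}(n)$, all of which are well known to be irreducible once $n \geq 4$.

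The main obstacle throughout is the $H(n)$ case, where the bracket is given by \eqref{hee1.2} rather than the more transparent commutator of vector fields, so one must track the index-pairing $i \mapsto i'$ and the parity signs carefully. Otherwise, all four assertions reduce to elementary manipulations of monomials $x_{i_1}\cdots x_{i_k}\partial_j$ in the exterior algebra, which is why the authors record the lemma with the remark \emph{``by a direct calculation.''}
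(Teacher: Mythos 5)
Your proposal is correct and follows exactly the route the paper intends: the paper itself offers no argument beyond the phrase ``by a direct calculation,'' and your sketch supplies precisely those standard monomial computations (bracketing with $\partial_\ell$ resp.\ $\mathrm{D_H}(x_\ell)$ for (1) and (3), generation by the local part for (2), and irreducibility of the (dual) natural module for (4)). No discrepancy with the paper's approach.
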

 Suppose $L$ is a Cartan type Lie superalgebra.
Since $L$   has the weight space decompositions and $\mathbb{Z}$-grading,   of course $\mathrm{BDer}(L)$ inherits a $H_{L}^{*}$-grading and $\mathbb{Z}$-grading from $L$ as above, that is
$$
\mathrm{BDer}(L)=\bigoplus_{\varepsilon\in H_{L}^{*}}\mathrm{BDer}(L)_{\varepsilon}, \mathrm{BDer}(L)=\bigoplus_{i\in \mathbb{Z}}\mathrm{BDer}(L)_{i},
$$
where
$$
\mathrm{BDer}(L)_{\varepsilon}=\{f\in \mathrm{BDer}(L)\mid f(L_{\eta},L_{\zeta})\subseteq L_{\varepsilon+\eta+\zeta},  \forall \eta, \zeta\in H_{L}^{*}\},
$$
$$
\mathrm{BDer}(L)_{i}=\{f\in \mathrm{BDer}(L)\mid f(L_{j},L_{k})\subseteq L_{i+j+k},  \forall j,k\in  \mathbb{Z}\}.
$$
As the proof of the Lemma 3.3 in \cite[ p97]{yt}, we can prove  the  following Lemma, which  is useful to investigate the super-biderivations of Cartan type Lie superalgebra.
\begin{lemma}\label{l55}
Let $L$ be a  Cartan type Lie superalgebra and $f\in  \mathrm{BDer}(L)$. Then there are linear maps $\phi_{f}$ and $\psi_{f}$ from $L$ into $L'$  such that
\begin{eqnarray*}\label{e2}
f(x, y)=[\phi_{f}(x), y]=[x, \psi_{f}(y)]
\end{eqnarray*}
for all $x, y\in L.$
Furthermore, we have the following properties hold:
\begin{itemize}
  \item [(1)] If $f\in  \mathrm{BDer}(L)_{\varepsilon}$, then  $||\phi_{f}||=||\psi_{f}||=\varepsilon$.
  \item [(2)] If $f\in  \mathrm{BDer}(L)_{i}$, then $\mathrm{deg}(\phi_{f})=\mathrm{deg}(\psi_{f})=i$.
\end{itemize}

\end{lemma}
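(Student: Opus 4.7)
The plan is to use each of the two super-biderivation identities once: identity (\ref{e3}) will give $\phi_f$, and identity (\ref{e1}), after a sign-twist, will give $\psi_f$; in both cases the key ingredient is Lemma \ref{l1}, which identifies $\mathrm{Der}(L)$ with $\mathrm{ad}\,L'$. Because $\mathrm{BDer}(L)$ decomposes under the $\mathbb{Z}_2$-, $H_L^*$- and $\mathbb{Z}$-gradings listed just above the lemma, I can reduce to the case that $f$ is homogeneous with respect to all three gradings, so that parts (1) and (2) will come out of the construction for free.

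For $\phi_f$, fix a homogeneous $x\in L$ and set $D_x(y):=f(x,y)$. Identity (\ref{e3}) is literally the assertion that $D_x$ is a super-derivation of $L$ of parity $|f|+|x|$. By Lemma \ref{l1} there exists $u\in L'$ with $D_x=\mathrm{ad}\,u$; a short case check shows that $\mathrm{ad}\colon L'\to\mathrm{Der}(L)$ is injective (for $L=W(n),\widetilde{S}(n)$ this is because $L'=L$ is simple with trivial centre; for $L=S(n),H(n)$, $\mathcal{C}$ acts on $L_j$ by the scalar $j$, so combined with the irreducibility of $L_{-1}$ as an $L_0$-module in Lemma \ref{l2}(4), no nonzero element of $\mathbb{C}\mathcal{C}\oplus L_0$ can annihilate $L_{-1}$). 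Uniqueness of $u$ lets me define $\phi_f(x):=u$, and linearity of $\phi_f$ follows from linearity of $f$ in the first variable together with uniqueness.

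For $\psi_f$, I would play the same game with (\ref{e1}): for fixed homogeneous $y$, the raw map $x\mapsto f(x,y)$ is not a super-derivation in $x$ because the coefficients $(-1)^{|x||f|}$ and $(-1)^{|y||z|}$ in (\ref{e1}) are the wrong Koszul signs. The fix is the sign-twisted map $E_y(x):=(-1)^{|x||y|}f(x,y)$, which a short bookkeeping turns into a super-derivation of parity $|f|+|y|$, so that Lemma \ref{l1} produces $v\in L'$ with $E_y=\mathrm{ad}\,v$. Super-skew-symmetry $[v,x]=-(-1)^{|v||x|}[x,v]$ then unwinds the twist in reverse to give a sign-rescaled element $\psi_f(y)\in L'$ satisfying $f(x,y)=[x,\psi_f(y)]$, and linearity of $\psi_f$ is again forced by uniqueness. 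This sign-juggling step is the main obstacle I anticipate, but once the twist is pinned down the argument is formal. Finally, parts (1) and (2) are automatic: by uniqueness of the representing element, if $f$ has weight $\varepsilon$ (resp. $\mathbb{Z}$-degree $i$) and $x\in L_\eta$ (resp. $L_j$), then $D_x$ shifts weight by $\varepsilon+\eta$ (resp. degree by $i+j$), which forces $\phi_f(x)\in L'_{\varepsilon+\eta}$ (resp. $L'_{i+j}$); the same reasoning applies to $\psi_f$.
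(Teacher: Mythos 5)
Your overall strategy is the one the paper itself relies on: the paper gives no proof of Lemma \ref{l55}, deferring entirely to \cite[Lemma~3.3]{yt}, and that argument is exactly the ``freeze one argument, recognize a super-derivation, invoke $\mathrm{Der}L=\mathrm{ad}L'$'' scheme you propose. The $\phi_{f}$ half of your construction is correct: the second defining identity of a super-biderivation says precisely that $D_{x}=f(x,\cdot)$ is a super-derivation of parity $|f|+|x|$, your sketch of the injectivity of $\mathrm{ad}\colon L'\to\mathrm{Der}(L)$ is repairable along the lines you indicate, and parts (1) and (2) do follow from uniqueness of the representing element exactly as you say.

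The gap is in the $\psi_{f}$ half, and it is genuine rather than cosmetic. Your twisted map $E_{z}(x)=(-1)^{|x||z|}f(x,z)$ is indeed a super-derivation of parity $|f|+|z|$: expanding (\ref{e1}) gives
\begin{equation*}
E_{z}([x,y])=[E_{z}(x),y]+(-1)^{|x|(|f|+|z|)}[x,E_{z}(y)],
\end{equation*}
so $E_{z}=\mathrm{ad}\,v_{z}$ for a unique $v_{z}\in L'$ with $|v_{z}|=|f|+|z|$. But undoing the twist yields
\begin{equation*}
f(x,z)=(-1)^{|x||z|}[v_{z},x]=-(-1)^{|x||z|+(|f|+|z|)|x|}[x,v_{z}]=(-1)^{|f||x|}[x,-v_{z}],
\end{equation*}
and the residual sign $(-1)^{|f||x|}$ depends on the \emph{first} argument $x$, so it cannot be absorbed into a redefinition of the single element $\psi_{f}(z)$. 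When $|f|=\bar{0}$ the sign is $+1$ and your argument is complete; when $|f|=\bar{1}$ it only produces $f(x,z)=(-1)^{|f||x|}[x,\psi_{f}(z)]$, not the stated identity $f(x,z)=[x,\psi_{f}(z)]$. The phrase ``unwinds the twist in reverse to give a sign-rescaled element'' is precisely where this is hidden: no fixed element of $L'$ can account for a sign that varies with $x$. To close the gap you must either carry the sign $(-1)^{|f||x|}$ in the statement (which is how the identity should read for odd $f$, and which is harmless for the vanishing arguments in Sections 4 and 5, since there one only ever concludes $\psi_{f}(y)=0$ from $[x,\psi_{f}(y)]=0$ for all $x$), or prove separately that odd homogeneous super-biderivations vanish --- and the latter cannot be done by appealing to the later propositions, since their proofs use this lemma.
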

By Lemmas \ref{l2} and  \ref{l55}, we have the following  two lemmas.
\begin{lemma}\label{l56}
Let $L$ be a   Cartan type Lie superalgebra and $f\in  \mathrm{BDer}(L).$ If  $\phi_{f}(L_{-1}\oplus L_{\xi_{L}})=0$ or $\psi_{f}(L_{-1}\oplus L_{\xi_{L}})=0,$ then  $f$ is zero.
\end{lemma}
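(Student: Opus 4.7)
The plan is to show $\phi_f\equiv 0$ (in the first case) or $\psi_f\equiv 0$ (in the second case) by successively exploiting the vanishing of $f$ against $L_{-1}$ and $L_{\xi_L}$, and then to conclude $f=0$ from $f(x,y)=[\phi_f(x),y]=[x,\psi_f(y)]$ (Lemma~\ref{l55}).

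The two hypotheses are symmetric (via the interchange of $\phi_f$ with $\psi_f$), so I would treat only the case $\psi_f(L_{-1}\oplus L_{\xi_L})=0$. This gives $f(x,y)=0$ whenever $y\in L_{-1}\cup L_{\xi_L}$, hence
\[
[\phi_f(x),L_{-1}]=0 \quad\text{and}\quad [\phi_f(x),L_{\xi_L}]=0 \qquad \text{for every } x\in L.
\]
Next I would use the (easy but essential) observation that in all four Cartan-type families the $(-1)$-component of $L'$ coincides with $L_{-1}$: by Lemma~\ref{l1} the possible extra summand $\mathbb{C}\mathcal{C}$ sits in $\mathbb{Z}$-degree $0$, and $\widetilde{H}(n)_{-1}=H(n)_{-1}$ since only monomials $x_i$ feed into negative-degree Hamiltonian fields. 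Combined with the abelianness of $L_{-1}$, the first relation above together with the transitivity of $L'$ (Lemma~\ref{l2}(3)) then forces $\phi_f(x)\in L'_{-1}=L_{-1}$.

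It remains to extract $\phi_f(x)=0$ from $[\phi_f(x),L_{\xi_L}]=0$ with $\phi_f(x)\in L_{-1}$. For this I would consider the subspace
\[
M=\{a\in L_{-1}\mid [a,L_{\xi_L}]=0\},
\]
which is an $L_0$-submodule of $L_{-1}$ because $[L_0,L_{\xi_L}]\subseteq L_{\xi_L}$ by the $\mathbb{Z}$-grading. Lemma~\ref{l2}(1) gives $[L_{-1},L_{\xi_L}]=L_{\xi_L-1}\neq 0$, so $M\neq L_{-1}$; the $L_0$-irreducibility of $L_{-1}$ (Lemma~\ref{l2}(4)) then forces $M=0$. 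Hence $\phi_f(x)=0$ for every $x$, and therefore $f\equiv 0$.

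The only delicate point is the identification $L'_{-1}=L_{-1}$ in the enlarged cases $S(n)$ and $H(n)$: without it one would only know that $\phi_f(x)$ commutes with the subspace $L_{-1}\subseteq L'_{-1}$, which is weaker than the hypothesis needed to invoke transitivity. Everything else is a formal consequence of the transitivity of $L'$ and the irreducibility of $L_{-1}$ under $L_0$, with the symmetric case handled by simply swapping $\phi_f$ and $\psi_f$ throughout.
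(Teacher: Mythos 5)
Your argument reaches the right conclusion but by a genuinely different route from the paper's. The paper proceeds by descending induction on the $\mathbb{Z}$-degree: from $\phi_f(L_{-1})=0$ it gets $f(L_{-1},L)=0$; then, writing any $z\in L_{i-1}$ as a sum of brackets $[u,v]$ with $u\in L_{-1}$, $v\in L_i$ (Lemma \ref{l2}(1)) and applying the first identity in (\ref{e1}), the vanishing $f(L_i,L)=0$ propagates to $f(L_{i-1},L)=0$, and starting the induction at $i=\xi_L$ kills $f$ entirely. That argument uses only Lemma \ref{l2}(1) and the biderivation axiom. You instead pin down each value $\phi_f(x)$ as an element of $L_{-1}$ centralizing $L_{\xi_L}$ and then invoke transitivity and irreducibility (Lemma \ref{l2}(3),(4)). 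Both are legitimate; yours avoids the induction, and your observation that one must identify $L'_{-1}$ with $L_{-1}$ before transitivity can be applied is a genuine point of care that the paper's route does not even need to confront.

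There is, however, one concrete error in your write-up: $L_{-1}$ is \emph{not} abelian when $L=\widetilde{S}(n)$. Writing $\omega=x_1x_2\cdots x_n$, one computes
$\left[(1-\omega)\partial_1,(1-\omega)\partial_1\right]=-2\,x_2\cdots x_n\partial_1\neq 0$,
an element of degree $n-2$; this failure of $\mathbb{Z}$-homogeneity is exactly why $\widetilde{S}(n)$ is only $\mathbb{Z}_n$-graded. So the step in which you combine abelianness of $L_{-1}$ with transitivity to conclude $\phi_f(x)\in L'_{-1}$ does not work as stated for $\widetilde{S}(n)$. The step is repairable without abelianness: decompose $\phi_f(x)=\sum_i a_i$ into graded components of $L'$; since $[a_i,L_{-1}]$ lies in the component of degree $i-1$ (taken mod $n$ for $\widetilde{S}(n)$) and these components are pairwise distinct as $i$ ranges over $\{-1,0,\dots,\xi_L\}$, the relation $[\phi_f(x),L_{-1}]=0$ forces $[a_i,L_{-1}]=0$ for each $i$ separately, and Lemma \ref{l2}(3) then kills $\sum_{i\geq 0}a_i$. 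With that substitution, the remainder of your argument (the $L_0$-submodule $M=\{a\in L_{-1}\mid [a,L_{\xi_L}]=0\}$, the inequality $M\neq L_{-1}$ from $[L_{-1},L_{\xi_L}]=L_{\xi_L-1}\neq 0$, and irreducibility) goes through.
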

\begin{proof}
We only consider the case   $\phi_{f}(L_{-1}\oplus L_{\xi_{L}})=0$. The other case may be treated in a similar way. By Lemma \ref{l55}, we have $f(x,y)=[\phi_{f}(x),y]=0$ for all $x\in L_{-1}$ and $y\in L.$ Then by Lemma \ref{l2} (1) and (\ref{e1}) we can prove the  following fact:
If  $f(x,y)=0$ for all  $x\in L_{i}$ and $y\in L,$ then $f(z,y)=0$ for all  $z\in L_{i-1}$ and $y\in L.$
By Lemma \ref{l55}, we have $f(x,y)=[\phi_{f}(x),y]=0$ for all $x\in  L_{\xi_{L}}$ and $y\in L.$ This together with the above fact implies that  $f$ is zero.
\end{proof}
\begin{lemma}\label{l57}
Let $L$ be a   Cartan type Lie superalgebra and $f\in  \mathrm{BDer}(L)$. If  $\phi_{f}(L_{-1}\oplus L_{0}\oplus L_{1})=0$ or $\psi_{f}(L_{-1}\oplus L_{0}\oplus L_{1})=0,$ then  $f$ is zero.
\end{lemma}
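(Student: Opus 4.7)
The goal is to upgrade the hypothesis $\phi_f(V)=0$, where $V=L_{-1}\oplus L_0\oplus L_1$, to the stronger conclusion $f\equiv 0$. The plan is to combine two ingredients already at hand: Lemma~\ref{l2}(2), which asserts $L=\mathrm{alg}(V)$, and the biderivation identity (\ref{e1}), which computes $f$ on a bracket in terms of $f$ on the two factors.

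By Lemma~\ref{l55}, the hypothesis $\phi_f(V)=0$ means that $f(u,z)=[\phi_f(u),z]=0$ for every $u\in V$ and every $z\in L$. I would then prove, by induction on the smallest number of nested brackets needed to express a $\mathbb{Z}_2$-homogeneous $x\in L$ as a combination of elements of $V$, that $f(x,z)=0$ for all $z\in L$. The base case is precisely the hypothesis. For the inductive step, if $x=[x_1,x_2]$ with $x_1,x_2$ of smaller bracket depth, applying (\ref{e1}) to $(x_1,x_2,z)$ gives
\[
f([x_1,x_2],z)=(-1)^{|x_1||f|}[x_1,f(x_2,z)]+(-1)^{|x_2||z|}[f(x_1,z),x_2],
\]
and both summands vanish by the induction hypothesis. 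Since every element of $L$ is a linear combination of such iterated brackets by Lemma~\ref{l2}(2), we conclude $f\equiv 0$.

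The case $\psi_f(V)=0$ is handled symmetrically: here Lemma~\ref{l55} gives $f(x,u)=[x,\psi_f(u)]=0$ for all $x\in L$ and $u\in V$, and the same kind of induction is carried out on the \emph{second} argument, using the companion identity $f(x,[y,z])=[f(x,y),z]+(-1)^{(|f|+|x|)|y|}[y,f(x,z)]$ to propagate the vanishing from $V$ to all of $L$. I do not foresee a substantial obstacle; the only care required is to work with $\mathbb{Z}_2$-homogeneous representatives when applying the biderivation identities so that the signs are well defined, and then extend by bilinearity. Note that this argument in fact bypasses Lemma~\ref{l56}, although one could alternatively try to first deduce $\phi_f(L_{\xi_L})=0$ and invoke Lemma~\ref{l56}; the direct propagation above seems cleaner because $\phi_f$ itself is merely a linear map and has no a priori compatibility with the bracket.
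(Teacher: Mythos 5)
Your proposal is correct and follows essentially the same route as the paper: the paper's proof likewise notes that $\phi_f(L_{-1}\oplus L_0\oplus L_1)=0$ forces $f(x,y)=[\phi_f(x),y]=0$ for $x$ in the generating set, and then invokes Lemma \ref{l2}(2) together with identity (\ref{e1}) to propagate the vanishing to all of $L$, treating the $\psi_f$ case symmetrically. Your write-up merely makes the implicit induction on bracket depth explicit.
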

\begin{proof}
We only consider the case    $\phi_{f}(L_{-1}\oplus L_{0}\oplus L_{1})=0$. The other case may be treated in a similar way. Then
by Lemma \ref{l55}, we have $f(x,y)=0$ for all $x\in L_{-1}\oplus L_{0}\oplus L_{1}$ and $y\in L.$
  By Lemma \ref{l2}  (2) and (\ref{e1}), we have $f(z,y)=0$ for all  $z, y\in L.$ Then $f$ is zero.
\end{proof}
The following Lemma is given by Theorem 2.4 in \cite{t1}.
\begin{lemma}\label{yuanl3}
Let $L$ be a finite-dimensional simple Lie algebra over  $\mathbb{C}$. Then every biderivation of $L$ is inner.
\end{lemma}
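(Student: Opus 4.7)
The plan is to realize any biderivation $f$ of a finite-dimensional simple complex Lie algebra $L$ as a scalar multiple of the bracket, by passing through the centroid. First I would fix $x\in L$ and observe that the second defining identity of a biderivation,
\begin{equation*}
f(x,[y,z])=[f(x,y),z]+[y,f(x,z)],
\end{equation*}
is precisely the derivation axiom for the map $f_{x}:=f(x,\cdot):L\to L$. Since $L$ is finite-dimensional semisimple, Whitehead's first lemma gives $H^{1}(L,L)=0$, so every derivation is inner; hence there is a unique $\phi(x)\in L$ with $f(x,y)=[\phi(x),y]$ for all $y$, and by uniqueness $x\mapsto\phi(x)$ is linear. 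A symmetric argument on the first biderivation identity (using antisymmetry of the bracket to reshape it into the derivation axiom in the first slot) produces a linear map $\psi:L\to L$ with $f(x,y)=[x,\psi(y)]$. Thus $[\phi(x),y]=[x,\psi(y)]$ for all $x,y\in L$.

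The next step is to promote $\phi$ into the centroid $\Gamma(L):=\{T\in\mathrm{End}(L)\mid T([x,y])=[Tx,y]=[x,Ty]\}$. To do so I would expand $f([x,y],z)$ in two ways: directly as $[\phi([x,y]),z]$, and via the first biderivation identity as $[x,[\phi(y),z]]+[[\phi(x),z],y]$. Applying Jacobi to each inner bracket on the right and using the already-established relation $[\phi(u),v]=[u,\psi(v)]$ to absorb the cross-terms involving $\psi$, one arrives after rearrangement at $[\phi([x,y])-[x,\phi(y)],z]=0$ for all $z$, hence $\phi([x,y])=[x,\phi(y)]$ since $L$ has trivial center. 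Combining this with the symmetric computation gives $\phi([x,y])=[\phi(x),y]$ as well, placing $\phi\in\Gamma(L)$.

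Finally, because $L$ is simple over the algebraically closed field $\mathbb{C}$, its adjoint action on itself is absolutely irreducible, so Schur's lemma forces $\Gamma(L)=\mathbb{C}\cdot\mathrm{id}_{L}$. Therefore $\phi(x)=\lambda x$ for some $\lambda\in\mathbb{C}$, which yields $f(x,y)=\lambda[x,y]$, i.e.\ $f$ is inner.

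The delicate step is the middle one: passing from the weak identity $[\phi(x),y]=[x,\psi(y)]$ to the strong centroid relations $\phi([x,y])=[\phi(x),y]=[x,\phi(y)]$. The bookkeeping of Jacobi expansions together with the intertwining between $\phi$ and $\psi$ is where all the real work sits; the derivation-level and Schur-level arguments bracketing it are entirely standard. Alternatively, one could bypass this by restricting to a Cartan subalgebra $H$, using the root-space decomposition $L=H\oplus\bigoplus_{\alpha\in\Phi}L_{\alpha}$ to show that $\phi$ preserves weights and that $\phi|_{L_{\alpha}}$ must be scalar multiplication by the same constant for every $\alpha$, but this route trades conceptual cleanness for case-by-case root computations.
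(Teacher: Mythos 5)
The first and third stages of your outline are sound: the second biderivation identity does say that $f(x,\cdot)$ is a derivation, Whitehead/inner-derivation plus triviality of the center gives a well-defined linear $\phi$ with $f(x,y)=[\phi(x),y]$ (and symmetrically $\psi$ with $f(x,y)=[x,\psi(y)]$), and Schur's lemma does give $\Gamma(L)=\mathbb{C}\,\mathrm{id}$. But the middle step --- extracting the centroid relation $\phi([x,y])=[x,\phi(y)]$ by ``expanding $f([x,y],z)$ in two ways'' with Jacobi bookkeeping --- cannot work, and this is precisely where you have located all the real work. The obstruction is structural: once $f(x,y)=[\phi(x),y]=[x,\psi(y)]$ is known, \emph{both} biderivation identities become automatic consequences of the Jacobi identity. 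Indeed, $f(x,[y,z])=[\phi(x),[y,z]]=[[\phi(x),y],z]+[y,[\phi(x),z]]=[f(x,y),z]+[y,f(x,z)]$, and $f([x,y],z)=[[x,y],\psi(z)]=[x,[y,\psi(z)]]-[y,[x,\psi(z)]]=[x,f(y,z)]+[f(x,z),y]$. So the biderivation axioms carry no information beyond the single intertwining relation $[\phi(x),y]=[x,\psi(y)]$, and every re-expansion of $f([x,y],z)$ collapses to $0=0$ after substituting $f(u,v)=[\phi(u),v]$ or $[u,\psi(v)]$ back in (you can verify this directly: the candidate identity $[u,\phi(v)]+[\phi(u),v]+f(v,u)-f(u,v)=0$ that such a computation produces is a tautology). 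The centroid property must instead be wrung out of the relation $[\phi(x),y]=[x,\psi(y)]$ using the structure of $L$ --- for instance the root-space analysis you mention and set aside as an ``alternative,'' or a Killing-form argument (e.g.\ $S=\phi-\psi$ satisfies $[S(x),y]=[S(y),x]$, so the trilinear form $\kappa([S(x),y],z)$ is symmetric in the first two slots and alternating in the last two, hence identically zero, forcing $S=0$; a further argument is then still needed for $\phi=\psi$). As written, your proof has a genuine gap at its central step.

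For comparison: the paper offers no proof of this lemma at all; it is quoted as Theorem~2.4 of the cited reference of Tang, so there is no internal argument to measure yours against. But if you want to supply a self-contained proof, the honest route is the one you dismissed: reduce to the linear-algebraic condition $[\phi(x),y]=[x,\psi(y)]$ and analyze it via the root-space decomposition (or the Killing form), rather than via further manipulation of the biderivation identities, which are already exhausted at that point.
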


\section{Super-biderivations  of $W(n), S(n)$ and  $\widetilde{S}(n)$}
In this section we shall
characterize the super-biderivations Lie superalgebras of  $W(n), S(n)$ and  $\widetilde{S}(n)$. Let $L=W(n), S(n)$ or  $\widetilde{S}(n)$. For $\varepsilon=\sum_{i=1}^{n}k_{i}\varepsilon_{i}\in H_{L}^{*}$, we write
$l(\varepsilon)=\sum_{i=1}^{n}k_{i}.$ Denote
$$A+B=\{\alpha+\beta\mid \alpha\in A, \beta\in B\},$$
where $A, B\subseteq H_{L}^{*}$.
 Then we have the following  super-biderivations vanishing  Proposition.
\begin{proposition}\label{pro4.1}
Let $L=W(n), S(n)$ or $\widetilde{S}(n)$   and $f\in \mathrm{BDer}(L)_{\varepsilon}$, where  $  \varepsilon\in H_{L}^{*}.$ If $\varepsilon\neq 0,$ then   $f$ is zero.
\end{proposition}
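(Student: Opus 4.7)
The plan is to exploit Lemma \ref{l55} together with the root-space decomposition of $L'$ to force $\phi_f$ to vanish on a generating subspace, and then invoke Lemma \ref{l57}. First I would apply Lemma \ref{l55} to obtain linear maps $\phi_f,\psi_f:L\to L'$ with
\begin{equation*}
f(x,y)=[\phi_f(x),y]=[x,\psi_f(y)]\quad\text{for all }x,y\in L,
\end{equation*}
and with $H_L^{*}$-weight equal to $\varepsilon$. By Lemma \ref{l57}, it suffices to show $\phi_f(L_{-1}\oplus L_0\oplus L_1)=0$. Since this subspace is a sum of $H_L^{*}$-weight spaces, I only need to test $\phi_f$ on weight vectors.

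Next I would exploit the fact (recorded after Lemma \ref{l1}) that $\Delta^{L'}=\Delta^{L}$ and that each nonzero root space of $L'$ is one-dimensional. Thus for $x\in L_\eta$ with $\eta\in H_L^{*}$, the element $\phi_f(x)\in L'_{\eta+\varepsilon}$ is either zero (when $\eta+\varepsilon$ fails to be a root of $L'$), a scalar multiple of the unique root vector $e_{\eta+\varepsilon}$, or, in the exceptional case $\eta+\varepsilon=0$, an element of the Cartan $H_{L'}$. Since $\varepsilon\neq 0$, the element $\psi_f(h)\in L'_{\varepsilon}$ is a scalar multiple of the fixed root vector $e_{\varepsilon}$; write $\psi_f(h)=c(h)e_{\varepsilon}$ for the linear functional $c:H_L\to\mathbb{C}$ this defines.

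The core identity is obtained by setting $y=h\in H_L$ in $[\phi_f(x),y]=[x,\psi_f(y)]$: for every weight vector $x\in L_\eta$,
\begin{equation*}
-(\eta+\varepsilon)(h)\,\phi_f(x)=c(h)\,[x,e_{\varepsilon}].
\end{equation*}
When $\eta+\varepsilon$ is not a root of $L'$, the left side is automatically zero and one concludes $c(h)[x,e_{\varepsilon}]=0$; combined with the generic existence of $x$ with $[x,e_{\varepsilon}]\neq 0$, this forces $c\equiv 0$, hence $\psi_f(H_L)=0$. When $\eta+\varepsilon$ is a nonzero root, the equation says that $c$ is a scalar multiple of the functional $\eta+\varepsilon$; varying $\eta$ through the (many) weights of $L_{-1}\oplus L_0\oplus L_1$ for which $\eta+\varepsilon\in\Delta^{L'}$ produces at least two non-proportional functionals, again forcing $c\equiv 0$. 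Once $c=0$, the displayed identity reduces to $(\eta+\varepsilon)(h)\phi_f(x)=0$, and since $\eta+\varepsilon\neq 0$ for all but possibly one weight $\eta=-\varepsilon$, we get $\phi_f(x)=0$ on all such weight spaces.

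The main obstacle is the remaining exceptional case $\eta=-\varepsilon$, where $\phi_f(x)\in H_{L'}$ and the Cartan test above is trivially satisfied (and where for $L=S(n)$ the central element $\mathcal{C}\in L'\setminus L$ may legitimately appear). To handle this I would re-use $[\phi_f(x),y]=[x,\psi_f(y)]$ with $y$ running through root vectors $\partial_k$ and $x_j\partial_l$: expanding $\phi_f(x)=\sum c_i x_i\partial_i$ and comparing coefficients of $\partial_k$ and of $x_j\partial_l$ on both sides yields a linear system in the $c_i$ whose only solution, given the already-derived vanishing of $\psi_f$ on various weight vectors, is $c_i=0$. This completes the vanishing of $\phi_f$ on $L_{-1}\oplus L_0\oplus L_1$, and Lemma \ref{l57} finishes the argument.
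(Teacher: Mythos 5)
Your proposal rests on a premise that is false for these algebras: that each nonzero root space of $L'$ is one-dimensional. The root system of $W(n)$ (and of $S(n)$, $\widetilde{S}(n)$) has large multiplicities. For example, the weight $\varepsilon_1$ in $W(n)$ is realized by every root vector $x_1x_j\partial_j$ with $j\neq 1$, so $\dim W(n)_{\varepsilon_1}=n-1$; the paper itself records $L'_{\varepsilon_2+\cdots+\varepsilon_i}=\mathrm{span}_{\mathbb{C}}\{x_2\cdots x_ix_j\partial_j\mid j=1,i+1,\ldots,n\}\cap L'$ in Subcase 5.2. Consequently you cannot write $\psi_f(h)=c(h)e_{\varepsilon}$ for a single root vector $e_{\varepsilon}$, and the subsequent steps that treat $c$ as a scalar-valued functional (proportionality to $\eta+\varepsilon$, ``two non-proportional functionals force $c\equiv 0$'') do not go through as stated: the identity $-(\eta+\varepsilon)(h)\phi_f(x)=[x,\psi_f(h)]$ only constrains $\psi_f(h)$ through its bracket with $x$, and with a multi-dimensional target one must kill each coefficient separately by explicit bracket computations, which is exactly what the paper does in Subcase 5.2 with $\partial_1$ and $x_1\partial_2$.

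A second, related gap is that your argument for $\psi_f(H_L)=0$ invokes ``generic existence'' of $x$ with $\eta+\varepsilon\notin\Delta^{L'}$ and $[x,\psi_f(h)]\neq 0$, but whether such $x$ exists depends delicately on $\varepsilon$; this is precisely why the paper splits into five cases according to whether the coefficients $k_i$ of $\varepsilon=\sum k_i\varepsilon_i$ are non-integral, negative, $\geq 3$, etc., and in several of those cases the conclusion is reached not through the Cartan subalgebra at all but through $\phi_f(L_{\xi_L})=0$ combined with transitivity (Lemma 3.3(3)) and Lemma 3.5. Your overall skeleton (use Lemma 3.4 to reduce to the vanishing of $\phi_f$ on low-degree components, then apply Lemma 3.6 or 3.5) is the right one and matches the paper, but the weight-space bookkeeping that constitutes the actual content of the proof is missing and, where sketched, relies on the incorrect one-dimensionality assumption.
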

\begin{proof}
Let $\varepsilon=\sum_{i=1}^{n}k_{i}\varepsilon_{i},$ where $k_{i}\in \mathbb{C}.$ Our discussion is divided
into five cases.

$\emph{Case 1.}$ Suppose   $k_{i}\notin \mathbb{Z}$  for some $1\leq i\leq n.$ In this case, we have $\left(\varepsilon+\Delta^{L}\right) \cap\Delta^{L'} =\emptyset.$  Since $||\phi_{f}(x)||\in \varepsilon+\Delta^{L}$ for all $x\in L,$ then $\phi_{f}(x)=0.$  By Lemma \ref{l55}, we can obtain $f$ is zero.

$\emph{Case 2.}$ Suppose $k_{i}\in \mathbb{Z}$ for all $1\leq i\leq n$  and  $k_{j}< 0$  for some $1\leq j\leq n.$  In this case, we have $\varepsilon-\varepsilon_{j}\notin \Delta^{L}$ for all $1\leq j \leq n.$ Note that $\phi_{f}(\partial_{j}), \psi_{f}(\partial_{j})\in L'_{\varepsilon-\varepsilon_{j}}.$ Then $\phi_{f}(\partial_{j})=\psi_{f}(\partial_{j})=0$ for all $1\leq j \leq n.$ Next, Our discussion is divided
into two subcases.

$\emph{Subcase 2.1.}$ Suppose $\varepsilon\notin -\Delta^{L}_{\xi_{L}}+\Delta^{L}_{-1}.$ In this case, we have $\left(\varepsilon+\Delta^{L}_{\xi_{L}}\right)\cap\Delta^{L'}_{-1}=\emptyset.$ Since  $$||\phi_{f}(L_{\xi_{L}})||, ||\psi_{f}(L_{\xi_{L}})||\in \varepsilon+\Delta^{L}_{\xi_{L}},$$ then $||\phi_{f}(L_{\xi_{L}})||, ||\psi_{f}(L_{\xi_{L}})||\notin\Delta^{L'}_{-1}$. Therefore $\phi_{f}(L_{\xi_{L}}), \psi_{f}(L_{\xi_{L}})\in \oplus_{i\geq 0}L'_{i}.$  By Lemma \ref{l55}, we have
      $$
0=[\phi_{f}(\partial_{j}), L_{\xi_{L}}]=[\partial_{j}, \psi_{f}( L_{\xi_{L}})]
$$
for all $1\leq j \leq n.$ By Lemma \ref{l2} (3), we have $\psi_{f}( L_{\xi_{L}})=0$.  By Lemma \ref{l56}, we have $f$ is zero.

 $\emph{Subcase 2.2.}$ Suppose $\varepsilon\in -\Delta_{\xi_{L}}^{L}+\Delta_{-1}^{L}.$ By direct calculation, we have
    \[
-\Delta_{\xi_{L}}^{L}+\Delta_{-1}^{L}=\left\{
 \begin{array}{ll}
\left\{-\sum_{i=1}^{n}\varepsilon_{i}+\varepsilon_{j}-\varepsilon_{l}\mid 1\leq j, l\leq n\right\},&\mbox{if  $L=W(n),$ }\\
\left\{-\sum_{j\neq i=1}^{n}\varepsilon_{i}+\varepsilon_{k}-\varepsilon_{l}\mid 1\leq j, k, l\leq n\right\},&\mbox{if $L=S(n), \widetilde{S}(n)$.}
\end {array}
\right.
\]
   Then for $\varepsilon\in -\Delta_{\xi_{L}}^{L}+\Delta_{-1}^{L},$ we have
\[
l(\varepsilon)=\left\{
 \begin{array}{ll}
-n,&\mbox{if  $L=W(n),$ }\\
-n+1,&\mbox{if $L=S(n), \widetilde{S}(n)$.}
\end {array}
\right.
\]
Sequentially, we have
\[
l(||\phi_{f}(x)||)=l(\varepsilon)+l(||x||)=\left\{
 \begin{array}{ll}
-n,&\mbox{if  $L=W(n),$ }\\
-n+1,&\mbox{if $L=S(n), \widetilde{S}(n)$}
\end {array}
\right.
\]
for all $x\in L_{0}.$ Note that $l(\alpha)\geq-1$ for all $\alpha\in \Delta^{L'}$ and $n\geq 4.$ Therefore, $\phi_{f}(x)=0,$ that is
    $\phi_{f}(L_{0})=0.$  Similarly, we can prove that $\phi_{f}(L_{1})=0.$ By Lemma \ref{l57}, we can obtain
      $f$ is zero.

$\emph{Case 3.}$ Suppose   $k_{i}\in \mathbb{Z}$ for all $1\leq i\leq n$ and  $k_{j}\geq 3$  for some $1\leq j\leq n.$
Note that for any $\alpha=\sum_{i=1}^{n}l_{i}\varepsilon_{i}\in \Delta^{L},$ we have $l_{j}\in \{-1, 0, 1\}.$ Then $k_{j}+l_{j}\geq 2.$ So
$\varepsilon+\alpha\notin \Delta^{L'},$
that is  $(\varepsilon+\Delta^{L})\cap \Delta^{L'}=\emptyset.$ Since $||\phi_{f}(x)||\in \varepsilon+\Delta^{L}$ for all $x\in L,$ then $\phi_{f}(x)=0.$  By Lemma \ref{l55}, we can obtain $f$ is zero.

$\emph{Case 4.}$ Suppose $k_{i}\in \mathbb{Z}$ for all $1\leq i\leq n$ and  $k_{p}=k_{q}= 2$  for some $1\leq p\neq q\leq n.$  Note that for any $\alpha=\sum_{i=1}^{n}l_{i}\varepsilon_{i}\in \Delta^{L},$ we have $l_{p}, l_{q}\in \{-1, 0, 1\}$, but $l_{p}$ and $l_{q}$ are not all equal to -1 at the same time. Then $k_{p}+l_{p}\geq 2$  or $k_{q}+l_{q}\geq 2.$ So
$\varepsilon+\alpha\notin \Delta^{L'},$
that is  $(\varepsilon+\Delta^{L})\cap \Delta^{L'}=\emptyset.$ Since $||\phi_{f}(x)||\in \varepsilon+\Delta^{L}$ for all $x\in L,$ then $\phi_{f}(x)=0.$  By Lemma \ref{l55}, we can obtain $f$ is zero.

$\emph{Case 5.}$ Suppose $\varepsilon=\sum_{i=1}^{n}k_{i}\varepsilon_{i}$, where  $k_{i}\in \{0, 1, 2\}$  and at  most one of  $k_{i}$ is equal to 2. Without loss of generality, we can write  $\varepsilon=a\varepsilon_{1}+\varepsilon_{2}+\cdots+\varepsilon_{i},$ where $1\leq i \leq n$ and $a\in \{1, 2\}.$
Since $\varepsilon\neq \theta,$ $l(\varepsilon)>0.$
Then
\[
l(\varepsilon+\alpha)> \left\{
 \begin{array}{ll}
n-1,&\mbox{if  $L=W(n),$ }\\
n-2,&\mbox{if $L=S(n), \widetilde{S}(n)$}
\end {array}
\right.
\]
for all $\alpha\in \Delta^{L}_{\xi_{L}}.$
Note that
\[
l(\beta)\leq \left\{
 \begin{array}{ll}
n-1,&\mbox{if  $L=W(n),$ }\\
n-2,&\mbox{if $L=S(n), \widetilde{S}(n)$}
\end {array}
\right.
\]
for all $\beta\in \Delta^{L'}.$
Therefore, $\left(\varepsilon+\Delta^{L}_{\xi_{L}}\right)\cap \Delta^{L'}=\emptyset.$ Then $\phi_{f}\left(L_{\xi_{L}}\right)=\psi_{f}(L_{\xi_{L}})=0.$
Next, Our discussion is divided
into two  subcases.

 $\emph{Subcase 5.1.}$ Suppose $\varepsilon=2\varepsilon_{1}+\varepsilon_{2}+\cdots+\varepsilon_{i}.$  In this case, we have $\phi_{f}(H_{L})=\phi_{f}(H_{L})=0.$ By Lemma \ref{l55}, we have
\begin{eqnarray}\label{yc1}\nonumber
0&=&\left[\phi_{f}(h), \partial_{l}-\delta_{L,\widetilde{S}}x_{1}x_{2}\cdots x_{n}\partial_{l}\right]=\left[h, \psi_{f}(\partial_{l}-\delta_{L,\widetilde{S}}x_{1}x_{2}\cdots x_{n}\partial_{l})\right]\\
&=&(2\varepsilon_{1}+\varepsilon_{2}+\cdots+\varepsilon_{i}-\varepsilon_{l})(h)\psi_{f}\left(\partial_{l}-\delta_{L,\widetilde{S}}x_{1}x_{2}\cdots x_{n}\partial_{l}\right)
\end{eqnarray}
      for all $h\in H_{L}.$ Since $2\varepsilon_{1}+\varepsilon_{2}+\cdots+\varepsilon_{i}-\varepsilon_{l}\neq \theta,$ there exists $h\in H_{L}$ such that
      $(2\varepsilon_{1}+\varepsilon_{2}+\cdots+\varepsilon_{i}-\varepsilon_{l})(h)\neq 0.$ By (\ref{yc1}), we have $\psi_{f}\left(\partial_{l}-\delta_{L,\widetilde{S}}x_{1}x_{2}\cdots x_{n}\partial_{l}\right)=0,$ that is $\psi_{f}(L_{-1})=0.$  By Lemma \ref{l56}, we have $f$ is zero.

  $\emph{Subcase 5.2.}$ Suppose $\varepsilon=\varepsilon_{1}+\varepsilon_{2}+\cdots+\varepsilon_{i}.$  In this case, we have $$\phi_{f}(x_{1}\partial_{j}), \psi_{f}(x_{1}\partial_{j})\in L'_{2\varepsilon_{1}+\varepsilon_{2}+\cdots+\varepsilon_{i}-\varepsilon_{j}}=0$$
      for all $j\neq 1.$ Since $\phi_{f}\left(\partial_{1}-\delta_{L,\widetilde{S}}x_{1}x_{2}\cdots x_{n}\partial_{1}\right)\in L'_{\varepsilon_{2}+\cdots+\varepsilon_{i}}$ and $$L'_{\varepsilon_{2}+\cdots+\varepsilon_{i}}=\mathrm{span}_{\mathbb{C}}\{x_{2}\cdots x_{i}x_{j}\partial_{j}\mid j=1, i+1,\ldots, n\}\cap L',$$ we can write
      $$\phi_{f}\left(\partial_{1}-\delta_{L,\widetilde{S}}x_{1}x_{2}\cdots x_{n}\partial_{1}\right)=a_{1}x_{2}\cdots x_{i}x_{1}\partial_{1}+\sum_{j=i+1}^{n}a_{j}x_{2}\cdots x_{i}x_{j}\partial_{j},$$
       where $a_{1}, a_{j}\in \mathbb{C}.$
        By Lemma \ref{l55}, we have
\begin{eqnarray*}\label{yt2}
0&=&\left[\partial_{1}\delta_{L,\widetilde{S}}x_{1}x_{2}\cdots x_{n}\partial_{1},\psi_{f}(x_{1}\partial_{2})\right]=\left[\phi_{f}(\partial_{1}-\delta_{L,\widetilde{S}}x_{1}x_{2}\cdots x_{n}\partial_{1}), x_{1}\partial_{2}\right]\\
&=&\left[a_{1}x_{2}\cdots x_{i}x_{1}\partial_{1}+\sum_{j=i+1}^{n}a_{j}x_{2}\cdots x_{i}x_{j}\partial_{j},x_{1}\partial_{2}\right]\\
&=&a_{1}x_{2}\cdots x_{i}x_{1}\partial_{2}-\sum_{j=i+1}^{n}a_{j}x_{1}x_{3}\cdots x_{i}x_{j}\partial_{j}.
\end{eqnarray*}
 Then $a_{1}=a_{i+1}=\cdots=a_{n}=0,$ that is $\phi_{f}\left(\partial_{1}-\delta_{L,\widetilde{S}}x_{1}x_{2}\cdots x_{n}\partial_{1}\right)=0.$  Similarly, we can prove that $\psi_{f}\left(\partial_{1}-\delta_{L,\widetilde{S}}x_{1}x_{2}\cdots x_{n}\partial_{1}\right)=0.$ Let $h\in H_{L}.$
 Since $\phi_{f}(h)\in L'_{\varepsilon_{1}+\cdots+\varepsilon_{i}}$ and $$L'_{\varepsilon_{1}+\cdots+\varepsilon_{i}}=\mathrm{span}_{\mathbb{C}}\{x_{1}\cdots x_{i}x_{j}\partial_{j}\mid j= i+1,\ldots, n\}\cap L',$$ we can write
      $$\phi_{f}(h)=\sum_{j=i+1}^{n}a_{j}x_{2}\cdots x_{i}x_{j}\partial_{j},$$
       where $ a_{j}\in \mathbb{C}.$
        By Lemma \ref{l55}, we have
\begin{eqnarray*}\label{yt2}
0&=&\left[\phi_{f}(\partial_{1}-\delta_{L,\widetilde{S}}x_{1}x_{2}\cdots x_{n}\partial_{1}),h\right]=\left[\partial_{1}-\delta_{L,\widetilde{S}}x_{1}x_{2}\cdots x_{n}\partial_{1}, \psi_{f}(h)\right]\\
&=&\left[\partial_{1}-\delta_{L,\widetilde{S}}x_{1}x_{2}\cdots x_{n}\partial_{l}, \sum_{j=i+1}^{n}a_{j}x_{1}\cdots x_{i}x_{j}\partial_{j}\right]\\
&=&\sum_{j=i+1}^{n}a_{j}x_{2}x_{3}\cdots x_{i}x_{j}\partial_{j}.
\end{eqnarray*}
 Then $a_{i+1}=\cdots=a_{n}=0,$ that is $\psi_{f}(h)=0.$  Similarly, we can prove that $\phi_{f}(h)=0.$
 Note that $\phi_{f}(\partial_{l}), \psi_{f}(\partial_{l})\in L'_{\varepsilon_{1}+\cdots+\varepsilon_{i}-\varepsilon_{l}}$ and there exists $h\in H_{L}$ such that
      $(\varepsilon_{1}+\cdots+\varepsilon_{i}-\varepsilon_{l})(h)\neq 0$  for all $l\neq 1.$
 By Lemma \ref{l55}, we have
      \begin{eqnarray*}\label{yt2}
0&=&\left[\partial_{l}-\delta_{L,\widetilde{S}}x_{1}x_{2}\cdots x_{n}\partial_{l}, \psi_{f}(h)\right]=\left[\phi_{f}(\partial_{l}-\delta_{L,\widetilde{S}}x_{1}x_{2}\cdots x_{n}\partial_{l}),h\right]\\
&=&-(\varepsilon_{1}+\cdots+\varepsilon_{i}-\varepsilon_{l})(h)\phi_{f}\left(\partial_{l}-\delta_{L,\widetilde{S}}x_{1}x_{2}\cdots x_{n}\partial_{l}\right).
\end{eqnarray*}
 Then $\phi_{f}\left(\partial_{l}-\delta_{L,\widetilde{S}}x_{1}x_{2}\cdots x_{n}\partial_{l}\right)=0$ for all $l\neq 1.$ In summary, $\phi(L_{-1})=0.$ By Lemma \ref{l56}, we have $f=0.$

\end{proof}
\begin{proposition}\label{pro4.2}
Let $L=W(n), S(n)$ or $\widetilde{S}(n)$. If  $f\in \mathrm{BDer}(L)_{\theta}$, then   $f$ is inner, that is there is $\lambda\in \mathbb{C}$ such that $f(x, y)=\lambda[x, y]$ for all $x, y\in L.$
\end{proposition}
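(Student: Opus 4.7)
The plan is to split $f$ by the $\mathbb{Z}$-grading, reduce to a $\mathbb{Z}$-degree-zero super-biderivation, extract a scalar $\lambda$ via Lemma~\ref{yuanl3} applied to the classical component $L_0$, and finally show that $g:=f-\lambda[\cdot,\cdot]$ vanishes by using the structure results of Lemmas~\ref{l2} and~\ref{l55} together with a weight bookkeeping.

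\textbf{Reduction to $\mathbb{Z}$-degree zero.} First I would decompose $f=\sum_k f_k$ into $\mathbb{Z}$-homogeneous components; each $f_k\in \mathrm{BDer}(L)_\theta\cap\mathrm{BDer}(L)_k$ because the biderivation identity~(\ref{e1}) respects the $\mathbb{Z}$-grading. The essential observation, read from the root description in the Preliminaries, is that every weight $\alpha$ appearing in $L'_m$ satisfies $l(\alpha)=m$ (modulo $n$ for $\widetilde{S}(n)$). Hence for a weight vector $v\in L_j$, $\phi_{f_k}(v)\in L'_{j+k}$ has the same weight as $v$, forcing $j=j+k$, i.e., $k=0$. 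So $\phi_{f_k}\equiv 0$ and $f_k=0$ for every $k\neq 0$, and we may assume $f=f_0$ is of $\mathbb{Z}$-degree zero.

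\textbf{Extracting $\lambda$ on $L_0$.} Since $f$ has $\mathbb{Z}$-degree zero, $f(L_0,L_0)\subseteq L_0$ and the restriction is an ordinary biderivation of the classical Lie algebra $L_0$. For $L=S(n)$ or $\widetilde{S}(n)$, $L_0\cong \mathfrak{sl}(n)$ is simple, and Lemma~\ref{yuanl3} immediately gives $\lambda\in\mathbb{C}$ with $f(x,y)=\lambda[x,y]$ for $x,y\in L_0$. For $L=W(n)$, where $L_0\cong \mathfrak{gl}(n)=\mathfrak{sl}(n)\oplus\mathbb{C}\mathcal{C}$, I would restrict to $\mathfrak{sl}(n)\times\mathfrak{sl}(n)$, project along $\mathbb{C}\mathcal{C}$ to obtain a biderivation of $\mathfrak{sl}(n)$, apply Lemma~\ref{yuanl3} to extract $\lambda$, substitute $f(x,y)=\lambda[x,y]+c(x,y)\mathcal{C}$ into~(\ref{e1}), and use the perfectness $\mathfrak{sl}(n)=[\mathfrak{sl}(n),\mathfrak{sl}(n)]$ to force $c\equiv 0$; the extension $f(\mathcal{C},y)=\lambda[\mathcal{C},y]=0$ for $y\in L_0$ then follows from the fact that $\mathrm{ad}\,\mathcal{C}$ is the $\mathbb{Z}$-grading operator.

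\textbf{Killing the residue.} Let $g=f-\lambda[\cdot,\cdot]$, a weight- and degree-zero super-biderivation with $g|_{L_0\times L_0}=0$. Lemma~\ref{l55} gives $[\phi_g(L_0),L_0]=0$, placing $\phi_g(L_0)$ in the $L_0$-centraliser of $L'_0$, which equals $\mathbb{C}\mathcal{C}$ (or $\{0\}$ when $\mathcal{C}\notin L'$); weight preservation then forces $\phi_g$ to kill every root vector of $L_0$ and to satisfy $\phi_g(h)=c(h)\mathcal{C}$ for a linear $c$ on $H_L$. Similarly $\psi_g(\partial_j)=\beta_j\partial_j$ by one-dimensionality of the $(-\varepsilon_j)$-weight space of $L'_{-1}$. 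Equating the two expressions $g(h,\partial_j)=[\phi_g(h),\partial_j]=[h,\psi_g(\partial_j)]$ yields $c=\beta_j\varepsilon_j$ for every $j$, and the linear independence of the $\varepsilon_j$ forces $c=0$ and each $\beta_j=0$; symmetry gives $\phi_g(L_{-1})=\psi_g(L_0)=0$. Hence $g$ vanishes whenever either slot lies in $L_{-1}\oplus L_0$. Finally, for $a\in L_1$, $z\in L$, and $x\in L_{-1}$, the identity~(\ref{e1}) applied to $g([x,a],z)=0$ gives $[x,g(a,z)]=0$; as $g(a,z)\in \bigoplus_{i\geq 0}L'_i$, Lemma~\ref{l2}\,(3) forces $g(a,z)=0$, and Lemma~\ref{l2}\,(2) propagates the vanishing to all of $L\times L$. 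The principal obstacle is the $W(n)$ case, where one must both isolate $\lambda$ inside the reductive $\mathfrak{gl}(n)$ and clean up the $\mathbb{C}\mathcal{C}$-valued tail; the $\widetilde{S}(n)$ case additionally requires reading the grading modulo $n$, but the weight-matching argument transfers verbatim since $(1-x_1\cdots x_n)\partial_j$ is still a weight vector for the $\mathfrak{sl}$-Cartan.
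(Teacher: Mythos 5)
Your argument is essentially correct, but it follows a genuinely different route from the paper's. The paper never invokes Lemma~\ref{yuanl3} for $W(n)$, $S(n)$, $\widetilde{S}(n)$: instead it exploits the fact that the weight spaces $L'_{-\varepsilon_i}$ and $\left(L'_{\xi_L}\right)_{\varepsilon}$ are one-dimensional, so a weight-$\theta$ biderivation forces $\phi_f$ and $\psi_f$ to act as scalars $\lambda_i$, $\beta_i$ on $L_{-1}$ and as scalars $\lambda_x$, $\beta_x$ on the top component $L_{\xi_L}$; pairing $L_{-1}$ against $L_{\xi_L}$ via $[\phi_f(\partial_i),x]=[\partial_i,\psi_f(x)]$ shows all these scalars coincide, transitivity (Lemma~\ref{l2}(3)) then propagates $\psi_f(y)=\lambda y$ to $\oplus_{j\geq 0}L_j$, and a single computation with $x_1\partial_2$, $x_2\partial_1$ identifies the two constants. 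Your route is instead the one the paper reserves for $H(n)$ (Proposition~\ref{pro2}): restrict to $L_0$, apply Lemma~\ref{yuanl3}, subtract the resulting inner biderivation, and kill the residue by centraliser and transitivity arguments. What your approach buys is uniformity with the $H(n)$ case and no need to inspect the top graded piece; what it costs is that $W(n)_0\cong\mathfrak{gl}(n)$ is not simple, so Lemma~\ref{yuanl3} does not apply directly and you must add the projection-along-$\mathbb{C}\mathcal{C}$ and perfectness argument (your sketch of this is correct and completable, as is the subsequent elimination of the $\mathbb{C}\mathcal{C}$-valued tail of $\phi_g$ on $H_L$ via $c=\beta_j\varepsilon_j$ for two independent $\varepsilon_j$). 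Two minor remarks: the initial reduction to $\mathbb{Z}$-degree zero is redundant, since a weight-$\theta$ map already preserves the $\mathbb{Z}$-grading by your own observation that $l(\alpha)$ determines the degree (mod $n$ for $\widetilde{S}(n)$); and your final propagation from $(L_{-1}\oplus L_0\oplus L_1)\times L$ to $L\times L$ is exactly the content of Lemma~\ref{l57}, which you could cite instead of re-deriving.
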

\begin{proof}
Let $1\leq i\leq n$ and $\varepsilon\in \Delta_{\xi_{L}}^{L'}.$ Since $\dim L'_{-\varepsilon_{i}}=1$ and  $\dim \left(L'_{\xi_{L}}\right)_{\varepsilon}=1,$  then we have
$$\phi_{f}\left(\partial_{i}-\delta_{L,\widetilde{S}}x_{1}x_{2}\cdots x_{n}\partial_{i}\right)=\lambda_{i}\left(\partial_{i}-\delta_{L,\widetilde{S}}x_{1}x_{2}\cdots x_{n}\partial_{i}\right),$$
$$\psi_{f}\left(\partial_{i}-\delta_{L,\widetilde{S}}x_{1}x_{2}\cdots x_{n}\partial_{i}\right)=\beta_{i}\left(\partial_{i}-\delta_{L,\widetilde{S}}x_{1}x_{2}\cdots x_{n}\partial_{i}\right),$$
$\phi_{f}(x)=\lambda_{x}x$ and $\psi_{f}(x)=\beta_{x}x,$ where $\lambda_{i}, \beta_{i}, \lambda_{x}, \beta_{x}\in \mathbb{C}$ and  $x\in \left(L'_{\xi_{L}}\right)_{\varepsilon}.$
By Lemma \ref{l55}, we have
$$
\left[\phi_{f}(\partial_{i}-\delta_{L,\widetilde{S}}x_{1}x_{2}\cdots x_{n}\partial_{i}), x\right]=\left[\partial_{i}-\delta_{L,\widetilde{S}}x_{1}x_{2}\cdots x_{n}\partial_{i}, \psi_{f}(x)\right].
$$
Then  $\lambda_{i}=\beta_{x}.$ Similarly, we have $\beta_{i}=\lambda_{x}.$   Therefore, $\lambda_{1}=\cdots=\lambda_{n}$  and $\beta_{1}=\cdots=\beta_{n},$  denoted $\lambda$ and $\beta$, respectively.  We   put
$\lambda=\lambda_{1} $ and $\beta=\beta_{1}.$
Let $y\in \sum_{j\geq 0}L_{j}$ and $1\leq i \leq n$,
then
\begin{eqnarray*}
f\left(\partial_{i}-\delta_{L,\widetilde{S}}x_{1}x_{2}\cdots x_{n}\partial_{i},y\right)&=&\left[\phi_{f}(\partial_{i}-\delta_{L,\widetilde{S}}x_{1}x_{2}\cdots x_{n}\partial_{i}),y\right]\\
&=&\lambda\left[\partial_{i}-\delta_{L,\widetilde{S}}x_{1}x_{2}\cdots x_{n}\partial_{i},y\right]\\
&=&\left[\partial_{i}-\delta_{L,\widetilde{S}}x_{1}x_{2}\cdots x_{n}\partial_{i},\psi_{f}(y)\right].
\end{eqnarray*}
Then
$$\left[\partial_{i}-\delta_{L,\widetilde{S}}x_{1}x_{2}\cdots x_{n}\partial_{i},\lambda y-\psi_{f}(y)\right]=0$$
 for all $1\leq i \leq n.$ Note that $ \psi_{f}(\oplus_{j\geq 0}L_{j})\subseteq \oplus_{j\geq 0}L'_{j},$ So by Lemma \ref{l2} (3), we have $\psi_{f}(y)=\lambda y.$
Similarly, we have $\phi_{f}(y)=\beta y.$
By Lemma \ref{l55}, we have
$$f(x_{1}\partial_{2},x_{2}\partial_{1})=[\phi_{f}(x_{1}\partial_{2}),x_{2}\partial_{1}]=\beta [x_{1}\partial_{2},x_{2}\partial_{1}]=[x_{1}\partial_{2},\psi_{f}(x_{2}\partial_{1})]=\lambda [x_{1}\partial_{2},x_{2}\partial_{1}].$$
Then $\lambda =\beta.$ In summary,   $\phi_{f}(x)=\lambda x$ for all $x\in L.$ That is $f$ is inner.
\end{proof}
By Propositions \ref{pro4.1} and \ref{pro4.2}, we have the following theorem.
\begin{theorem}
Let $L=W(n), S(n)$ or $\widetilde{S}(n)$. Then
$$\mathrm{BDer}(L)=\mathrm{IBDer}(L).$$
\end{theorem}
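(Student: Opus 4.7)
The plan is to deduce the theorem directly from the two propositions just established, using the $H_L^{*}$-weight grading on $\mathrm{BDer}(L)$ introduced in Section 3. Since $L$ is finite-dimensional and $H_L$ acts semisimply, every bilinear map $f:L\times L\to L$ decomposes into a finite sum of weight components $f=\sum_{\varepsilon\in H_L^{*}}f_{\varepsilon}$ with $f_{\varepsilon}\in\mathrm{BDer}(L)_{\varepsilon}$, and the defining axioms (\ref{e1}) respect this decomposition, so each $f_{\varepsilon}$ is itself a super-biderivation.

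First I would invoke Proposition \ref{pro4.1} to kill all nonzero weight components: for every $\varepsilon\neq\theta$ one has $f_{\varepsilon}=0$. This reduces the problem to showing that $f=f_{\theta}\in\mathrm{BDer}(L)_{\theta}$ is inner.

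Then I would apply Proposition \ref{pro4.2} directly: it produces a scalar $\lambda\in\mathbb{C}$ such that $f_{\theta}(x,y)=\lambda[x,y]$ for all $x,y\in L$, which is precisely the definition of an inner super-biderivation. Combining, $f=f_{\theta}\in\mathrm{IBDer}(L)$, and since the converse inclusion $\mathrm{IBDer}(L)\subseteq\mathrm{BDer}(L)$ is immediate from the definition, equality $\mathrm{BDer}(L)=\mathrm{IBDer}(L)$ follows.

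The proof is essentially bookkeeping once the two propositions are in hand; there is no real obstacle at this stage. The only point worth checking carefully is that the weight decomposition $f=\sum_{\varepsilon}f_{\varepsilon}$ indeed gives components that separately satisfy the super-biderivation identities (\ref{e1}) — this is routine because the bracket is weight-homogeneous and both sides of each identity split into matching weight pieces. With that verified, the theorem is an immediate corollary of Propositions \ref{pro4.1} and \ref{pro4.2}.
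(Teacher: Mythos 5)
Your proposal is correct and follows exactly the route the paper takes: the theorem is deduced as an immediate corollary of Propositions \ref{pro4.1} and \ref{pro4.2} via the weight decomposition $\mathrm{BDer}(L)=\bigoplus_{\varepsilon\in H_{L}^{*}}\mathrm{BDer}(L)_{\varepsilon}$ asserted in Section 3. Your remark that one should check each weight component is itself a super-biderivation is a sensible precaution, and that fact is precisely what the paper's grading claim encodes.
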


\section{Super-biderivations  of $H(n)$}
In this section we shall
characterize the super-biderivations   of   $H(n)$.  We have the following  super-biderivations vanishing  Proposition.
\begin{proposition}\label{pro1}
Let $L=H(n)$   and $f\in \mathrm{BDer}(L)_{i}$, where  $  i\in \mathbb{Z}.$ If $i\neq 0,$ then   $f$ is zero.
\end{proposition}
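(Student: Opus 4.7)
The plan is to apply Lemma \ref{l56}, so the goal is to force $\phi_f$ or $\psi_f$ to vanish on $L_{-1}\oplus L_{\xi_{H}}$. By Lemma \ref{l55} both $\phi_f$ and $\psi_f$ are linear maps $L\to L'$ of $\mathbb{Z}$-degree $i$; by Lemma \ref{l1}, $L'=\widetilde{H}(n)\oplus\mathbb{C}\mathcal{C}$ is $\mathbb{Z}$-graded only in degrees $-1$ through $n-2$ (the new top piece being $L'_{n-2}=\mathbb{C}\mathrm{D_{H}}(x_{1}\cdots x_{n})$). Plain degree counting already gives $\phi_f(L_{-1})\subseteq L'_{i-1}=0$ whenever $i\leq -1$ and $\phi_f(L_{\xi_{H}})\subseteq L'_{n-3+i}=0$ whenever $i\geq 2$; both images vanish once $i\leq 1-n$ or $i\geq n$, which reduces us to the finite range $2-n\leq i\leq n-1$, $i\neq 0$.

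For negative $i$ in this range we exploit $\phi_f(L_{-1})=0$: the identity $[\phi_f(x),y]=[x,\psi_f(y)]$ forces $[L_{-1},\psi_f(y)]=0$ for every $y\in L$, and a direct computation shows that the centralizer of $L_{-1}=\mathrm{span}\{\partial_{1},\ldots,\partial_{n}\}$ inside $L'$ equals $L'_{-1}=L_{-1}$. Hence $\psi_f(L)\subseteq L_{-1}$, and combined with the degree condition $\psi_f(L_{k})\subseteq L'_{k+i}$ this concentrates $\psi_f$ in the single slot $k=-1-i$. For every $i$ in the range except the endpoint $i=2-n$ this slot is disjoint from $\{-1,\xi_{H}\}$, so Lemma \ref{l56} concludes. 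In the exceptional case $i=2-n$ one substitutes $(u,v,w)\in L_{-1}\times L_{\xi_{H}}\times L_{\xi_{H}}$ into (\ref{e1}); since $f$ is supported only on $L\times L_{\xi_{H}}$ for this $i$, the identity collapses to $[L_{-1},f(v,w)]=0$, transitivity (Lemma \ref{l2}(3)) kills $f(v,w)\in L_{\xi_{H}-1}\subseteq \bigoplus_{j\geq 0}L_{j}$ (valid since $n>4$), and the non-degeneracy of the pairing $L_{\xi_{H}}\times L_{-1}\to L_{\xi_{H}-1}$ (the same sort of calculation that drives Lemma \ref{chenl1}) then forces $\psi_f(L_{\xi_{H}})=0$.

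For positive $i$ in the range $1\leq i\leq n-1$ the centralizer trick is unavailable, since $\mathrm{Cent}_{L'}(L_{\xi_{H}})$ contains all of $\bigoplus_{k\geq 1}L'_{k}$ and the condition $[L_{\xi_{H}},\psi_f(y)]=0$ is vacuous in the degrees that matter. Instead we decompose $f$ by $H_{L}^{*}$-weight into homogeneous components $f_{\varepsilon}\in\mathrm{BDer}(L)_{i,\varepsilon}$ and mimic the case-by-case approach of Proposition \ref{pro4.1}: weights $\varepsilon$ for which $(\varepsilon+\Delta^{L})\cap\Delta^{L'}=\emptyset$ kill $\phi_{f_\varepsilon}$ outright, and the handful of surviving weights are handled by plugging the explicit generators $\partial_{j}$, $\mathrm{D_{H}}(x_{j}x_{k})$, and (when needed) elements of $L_{1}$ into (\ref{e1}) and applying Lemma \ref{chenl1}, the surjectivity $[L_{-1},L_{j}]=L_{j-1}$ of Lemma \ref{l2}(1), and the $\mathfrak{so}(n)$-irreducibility of $L_{-1}$ from Lemma \ref{l2}(4), so as to propagate the vanishing to $\phi_f(L_{-1}\oplus L_{0}\oplus L_{1})=0$; Lemma \ref{l57} then concludes. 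The delicate subcase is the boundary value $i=1$, where $\phi_f(L_{\xi_{H}})\subseteq\mathbb{C}\mathrm{D_{H}}(x_{1}\cdots x_{n})$ survives all degree constraints and the single scalar of freedom must be pinned down explicitly.

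The main obstacle is the positive-$i$ side, most acutely $i=1$: the one-dimensional piece $\mathbb{C}\mathrm{D_{H}}(x_{1}\cdots x_{n})$ of $L'_{n-2}$ is not detected by any grading obstruction, so it has to be eliminated by direct computation with the biderivation identity (\ref{e1}) on carefully chosen test elements.
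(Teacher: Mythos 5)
Your treatment of negative $i$ is complete and correct: the degree count killing $\phi_f(L_{-1})$, the observation that the centralizer of $L_{-1}$ in $L'$ is $L'_{-1}$ (which is transitivity, Lemma \ref{l2}(3), plus $[L_{-1},L'_{-1}]=0$), and the resulting concentration of $\psi_f$ in the single slot $L_{-1-i}$ together reproduce the paper's Cases 1--2. Your endpoint $i=2-n$ could be dispatched far more cheaply, as the paper does: for $i\leq 2-n$ and $n>4$ one has $\phi_f(L_{-1}\oplus L_{0}\oplus L_{1})\subseteq L'_{i-1}\oplus L'_{i}\oplus L'_{i+1}=0$, and Lemma \ref{l57} finishes; also note that your final step there is not literally Lemma \ref{chenl1} (you need $x\in L_{-1}$ with $[x,L_{\xi_H}]=0\Rightarrow x=0$, whereas the lemma pairs $L_{k}$ against $L_{\xi_H-k}$), though the same computation proves it.

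The genuine gap is the positive-$i$ half: what you give there is a plan, not a proof, and you yourself concede that the $i=1$ obstruction coming from $L'_{n-2}=\mathbb{C}\mathrm{D_{H}}(x_{1}\cdots x_{n})$ is left unresolved. Worse, the plan as described --- decompose $f$ into $H_{L}^{*}$-weight components and run the root combinatorics of Proposition \ref{pro4.1} --- cannot work for $H(n)$: the zero-weight subspace of $H(n)$ is large (spanned by the $\mathrm{D_{H}}(x_{i_{1}}x_{i_{1}'}\cdots x_{i_{k}}x_{i_{k}'})$ in every $\mathbb{Z}$-degree), a component $f_{\theta}\in\mathrm{BDer}(L)_{i,\theta}$ with $i\neq 0$ meets no root obstruction whatsoever, and the troublesome element $\mathrm{D_{H}}(x_{1}\cdots x_{n})$ itself has weight $\theta$ --- so the weight analysis never sees the hard part. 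The paper's Case 3 is the intended route and is uniform in $i\geq 1$ with no weight decomposition: degree counting gives $\psi_f(L_{\xi_H-i+1}\oplus\cdots\oplus L_{\xi_H})=0$, hence $0=[\partial_j,\psi_f(x)]=f(\partial_j,x)=[\phi_f(\partial_j),x]$ for all $x\in L_{\xi_H-i+1}$, and since $\phi_f(\partial_j)\in L'_{i-1}$ with $(i-1)+(\xi_H-i+1)=\xi_H$, Lemma \ref{chenl1} forces $\phi_f(\partial_j)=0$; then $\phi_f(L_{-1}\oplus L_{\xi_H})=0$ and Lemma \ref{l56} concludes. (Your instinct about the one-dimensional piece $\mathbb{C}\mathrm{D_{H}}(x_{1}\cdots x_{n})$ is legitimate --- the paper's own assertion that $\psi_f(L_{\xi_H-i+1})\subseteq L'_{\xi_H+1}$ vanishes glosses over exactly this summand --- but the repair belongs inside that degree-theoretic argument, not in a root-system case analysis.)
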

\begin{proof} Our discussion is divided
into three cases.

$\emph{Case 1.}$ Suppose $i< -\xi_{H}$. Since $\xi_{H}=n-2$ and $n>4$,      $\phi_{f}(L_{-1}\oplus L_{0}\oplus L_{1})=0$. By Lemma \ref{l57}, we have $f=0.$

$\emph{Case 2.}$ Suppose $-\xi_{H}\leq i\leq -1$. In this case, we have $\phi_{f}(L_{-1})=\psi_{f}(L_{-1})=0$ and  $\psi_{f}(L_{\xi_{H}})\subseteq \oplus_{j\geq 0}L'_{j}$. Let $x\in L_{\xi_{H}}$ and $1\leq j\leq n.$ Then
by Lemma \ref{l55}, we have
$$0=[\phi_{f}(\partial_{j}),x]=f(\partial_{j},x)=[\partial_{j},\psi_{f}(x)].$$
By Lemma \ref{l2} (3), we have $\psi_{f}(x)=0$, that is  $\psi_{f}(L_{\xi_{H}})=0.$ By Lemma \ref{l56}, we have $f=0.$

$\emph{Case 3.}$ Suppose $i\geq 1.$ In this case, we have
$$\phi_{f}(L_{\xi_{H}-i+1}\oplus\cdots\oplus L_{\xi_{H}})=\psi_{f}(L_{\xi_{H}-i+1}\oplus\cdots\oplus L_{\xi_{H}})=0.$$
 Let $x\in L_{\xi_{H}-i+1}$ and $1\leq j \leq n.$
 By Lemma \ref{l55}, we have
$$0=[\partial_{j},\psi_{f}(x)]=f(\partial_{j},x)=[\phi_{f}(\partial_{j}),x],$$
that is $[\phi_{f}(\partial_{j}), L_{\xi_{H}-i+1}]=0.$ Note that $\phi_{f}(\partial_{j})\in L'_{i-1}$.
By Lemma \ref{chenl1}, we have $\phi_{f}(\partial_{j})=0.$ That is $\phi_{f}(L_{-1})=0.$  By Lemma \ref{l56}, we have $f$ is zero.
\end{proof}
\begin{proposition}\label{pro2}
Let $L=H(n)$.    If  $f\in \mathrm{BDer}(L)_{0}$, then   $f$ is inner, that is there is $\lambda\in \mathbb{C}$ such that $f(x, y)=\lambda[x, y]$ for all $x, y\in L.$
\end{proposition}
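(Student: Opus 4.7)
The plan is to reduce the problem to $L_0 \cong \mathfrak{so}(n)$, which is simple because $n \geq 5$, apply Lemma \ref{yuanl3} to extract the inner scalar, and then propagate vanishing through the $\mathbb{Z}$-grading to conclude via Lemma \ref{l57}. First I would observe that $f|_{L_0 \times L_0}$ takes values in $L_0$ (since $f$ has degree $0$) and that the two biderivation relations of $f$ restrict to the ordinary biderivation identities of $L_0$ (the super-signs trivialize on the even subalgebra). Lemma \ref{yuanl3} then yields $\lambda \in \mathbb{C}$ with $f(x,y) = \lambda[x,y]$ for all $x, y \in L_0$. Setting $g = f - f_\lambda$, the task reduces to showing that the degree-$0$ biderivation $g$ with $g(L_0, L_0) = 0$ must vanish.

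The central step is to prove $g(L_0, L_{-1}) = 0$. For each $x \in L_0$ define $T_x \colon L_{-1} \to L_{-1}$ by $T_x(z) = g(x,z)$. The super-biderivation identity applied to $g(x, [y, z])$, combined with $g(L_0, L_0) = 0$, gives
\begin{equation*}
T_x([y, z]) = [y, T_x(z)] \qquad (y \in L_0,\ z \in L_{-1}),
\end{equation*}
so $T_x$ is an $L_0$-module endomorphism of the irreducible module $L_{-1}$ (Lemma \ref{l2}(4)); Schur's lemma over $\mathbb{C}$ yields $T_x = \mu(x)\,\mathrm{Id}$ for some $\mu(x) \in \mathbb{C}$. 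The identity (\ref{e1}) applied to $g([x, y], z)$ then produces the constraint
\begin{equation*}
\mu([x, y])\,z \;=\; \mu(y)\,[x, z] - \mu(x)\,[y, z] \qquad (x, y \in L_0,\ z \in L_{-1}).
\end{equation*}
If $\mu \not\equiv 0$, then $\ker \mu$ is a hyperplane in $L_0$ and contains a nonzero $y$ since $\dim L_0 = n(n-1)/2 \geq 10 > 1$; choosing $x_0$ with $\mu(x_0) \neq 0$ the constraint collapses to $[y, z] = -\frac{\mu([x_0, y])}{\mu(x_0)}\,z$ for all $z \in L_{-1}$, so $y$ acts on $L_{-1} \cong \mathbb{C}^n$ as a scalar. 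Since $\mathfrak{so}(n)$ contains no nonzero element acting as a scalar on its natural representation, $y = 0$, a contradiction; hence $\mu \equiv 0$. A symmetric Schur argument on $U_c \colon L_{-1} \to L_{-1}$, $b \mapsto g(b, c)$ for fixed $c \in L_0$ gives $g(L_{-1}, L_0) = 0$.

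With these vanishings I would verify the hypothesis of Lemma \ref{l57}, namely $\phi_g(L_{-1} \oplus L_0 \oplus L_1) = 0$. For $z \in L_{-1}$, Lemma \ref{l55} and $g(L_0, L_{-1}) = 0$ show $\psi_g(z) \in L'_{-1} = L_{-1}$ centralizes $L_0$; irreducibility forces $\psi_g(L_{-1}) = 0$, and dually $\phi_g(L_{-1}) = 0$. For $x \in L_0$, $\phi_g(x) \in L'_0 = L_0 \oplus \mathbb{C}\mathcal{C}$ satisfies $[\phi_g(x), L_{-1}] = 0$; writing $\phi_g(x) = u + c\,\mathcal{C}$ and using $[\mathcal{C}, z] = -z$ for $z \in L_{-1}$ rewrites this as $[u, z] = c\,z$ for every such $z$, which forces $u \in \mathfrak{so}(n)$ to act as the scalar $c$ on $L_{-1}$; hence $u = 0 = c$ and $\phi_g(L_0) = 0$. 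For $x \in L_1$, $g(x, L_{-1}) = [x, \psi_g(L_{-1})] = 0$, so $\phi_g(x) \in L'_1 \subseteq \bigoplus_{i \geq 0} L'_i$ commutes with $L'_{-1}$; transitivity (Lemma \ref{l2}(3)) gives $\phi_g(L_1) = 0$. Lemma \ref{l57} then concludes $g = 0$, so $f = f_\lambda$ is inner.

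The hard part will be the Schur-lemma step combined with the scalar-action verification: one genuinely has to exploit that $\mathfrak{so}(n)$ contains no nonzero scalar operator on $\mathbb{C}^n$ in order to kill the auxiliary functional $\mu$. A technical issue I would need to handle at the outset is that $f$ is assumed only to have degree $0$, not weight $0$; decomposing into $H_L^*$-homogeneous components splits $f$ into weight-homogeneous biderivations, and for any nonzero weight $\varepsilon$ a direct weight comparison in Lemma \ref{yuanl3}'s conclusion already forces $\lambda = 0$, after which the remainder of the argument runs verbatim.
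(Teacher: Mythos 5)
Your proposal is correct, but it is organized differently from the paper's proof. Both arguments begin the same way: restrict $f$ to $L_{0}\times L_{0}\cong\mathfrak{so}(n)\times\mathfrak{so}(n)$ and invoke Lemma \ref{yuanl3} to extract the scalar $\lambda$. From there the paper does not subtract $f_{\lambda}$; it asserts $\phi_{f}(x)=\psi_{f}(x)=\lambda x$ on $L_{0}$, deduces $\psi_{f}(y)=\lambda y$ on $L_{-1}$ from $[L_{0},\lambda y-\psi_{f}(y)]=0$ and irreducibility (Lemma \ref{l2}(4)), and then $\psi_{f}(y)=\lambda y$ on $\oplus_{i\geq1}L_{i}$ from transitivity (Lemma \ref{l2}(3)), concluding $f=f_{\lambda}$ directly. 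You instead pass to $g=f-f_{\lambda}$, prove $g(L_{0},L_{-1})=0$ by a Schur's-lemma argument on the operators $T_{x}$ together with the fact that $\mathfrak{so}(n)$ contains no nonzero scalar operator on $\mathbb{C}^{n}$, and then verify the hypothesis of Lemma \ref{l57}. Your route is longer, but it buys genuine rigor at one point the paper glosses over: Lemma \ref{yuanl3} only gives $f(x,y)=\lambda[x,y]$ on $L_{0}\times L_{0}$, and since $L'_{0}=L_{0}\oplus\mathbb{C}\mathcal{C}$ with $\mathcal{C}$ centralizing $L_{0}$, this determines $\phi_{f}(x),\psi_{f}(x)$ only up to a multiple of $\mathcal{C}$; the paper silently discards that component, whereas your explicit decomposition $\phi_{g}(x)=u+c\,\mathcal{C}$ and the scalar-action argument eliminate it honestly. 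Two minor remarks: the Schur step could be shortened (e.g.\ taking traces in your constraint $\mu([x,y])z=\mu(y)[x,z]-\mu(x)[y,z]$ kills $\mu$ on $[L_{0},L_{0}]=L_{0}$ at once), and your closing worry about $H_{L}^{*}$-homogeneity is a non-issue, since Lemma \ref{yuanl3} applies to the restricted biderivation of $L_{0}$ regardless of any weight decomposition.
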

\begin{proof}
In this case, we have $f\mid_{L_{0}\times L_{0}}$ is super-biderivation of $L_{0}$. By Lemma \ref{yuanl3}, we can write $\phi_{f}(x)=\psi_{f}(x)=\lambda x$ for all $x\in L_{0}.$ By Lemma \ref{l55}, we have
$$\lambda[x, y]=[\phi_{f}(y),y]=f(x,y)=[x,\psi_{f}(y)]$$
for all $x\in L_{0}$ and $y\in L_{-1}$. Then $[L_{0}, \lambda y-\psi_{f}(y)]=0.$ By Lemma \ref{l2} (4), we have $\psi_{f}(y)=\lambda y$ for all  $y\in L_{-1}$.
By Lemma \ref{l55}, we have
$$\lambda[x, y]=[\phi_{f}(x),y]=f(x,y)=[x,\psi_{f}(y)]$$
for all $x\in L_{-1}$ and $y\in \oplus_{i\geq 1}L_{i}$. Then $[L_{-1}, \lambda y-\psi_{f}(y)]=0.$ By Lemma \ref{l2} (3), we have $\psi_{f}(y)=\lambda y.$
In summary,   $\psi_{f}(x)=\lambda x$ for all $x\in L.$ That is $f$ is inner.
\end{proof}
By Propositions \ref{pro1} and \ref{pro2}, we have the following theorem.
\begin{theorem}
Let $L=H(n)$. Then
$$\mathrm{BDer}(L)=\mathrm{IBDer}(L).$$
\end{theorem}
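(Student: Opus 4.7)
The plan is to reduce the theorem directly to the two preceding propositions via the $\mathbb{Z}$-grading decomposition of $\mathrm{BDer}(L)$ introduced in Section~3. Given an arbitrary $f\in \mathrm{BDer}(L)$, I would first invoke the decomposition $\mathrm{BDer}(L)=\bigoplus_{i\in\mathbb{Z}}\mathrm{BDer}(L)_{i}$ to write $f=\sum_{i}f_{i}$ with each $f_{i}\in\mathrm{BDer}(L)_{i}$. Note that the sum is finite because $L=H(n)$ itself is $\mathbb{Z}$-graded with finite support $-1\le j\le \xi_{H}$, so the homogeneous components of a bilinear map $L\times L\to L$ are supported in a finite range of degrees.

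Next I would apply Proposition~5.1 to each component with nonzero degree: for every $i\neq 0$, we have $f_{i}=0$. Hence the decomposition collapses to $f=f_{0}$ with $f_{0}\in \mathrm{BDer}(L)_{0}$. By Proposition~5.2, $f_{0}$ is inner, i.e.\ there exists $\lambda\in\mathbb{C}$ such that $f_{0}(x,y)=\lambda[x,y]$ for all $x,y\in L$. Therefore $f=f_{0}\in \mathrm{IBDer}(L)$, which gives $\mathrm{BDer}(L)\subseteq \mathrm{IBDer}(L)$. The reverse inclusion is trivial since every map of the form $(x,y)\mapsto\lambda[x,y]$ is a super-biderivation, as already observed in Section~2.

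There is essentially no new obstacle at this stage: all the structural work sits in Propositions~5.1 and~5.2, and in the auxiliary Lemmas~3.2, \ref{l55}, \ref{l56}, \ref{l57} used to prove them. The only minor point to verify explicitly is that the $\mathbb{Z}$-graded decomposition of $f$ is well defined and that each homogeneous component $f_{i}$ is again a super-biderivation; this follows at once from the fact that the defining identities \eqref{e1}--\eqref{e3} are homogeneous of degree zero in the $\mathbb{Z}$-grading inherited from $L$. Once this bookkeeping is in place, the theorem follows in a single line from the two propositions.
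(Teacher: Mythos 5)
Your proposal is correct and is essentially identical to the paper's argument: the paper derives the theorem directly from Propositions 5.1 and 5.2 via the $\mathbb{Z}$-graded decomposition $\mathrm{BDer}(L)=\bigoplus_{i\in\mathbb{Z}}\mathrm{BDer}(L)_{i}$ set up in Section 3. Your extra remarks on the finiteness of the decomposition and the homogeneity of the defining identities are reasonable bookkeeping that the paper leaves implicit.
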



\end{document}